\newtheorem{corollary}{Corollary}
\newtheorem{proposition}{Proposition}
\theoremstyle{definition}
\newtheorem{remark}{Remark}
\newtheorem{Satz}{Theorem} 
\newtheorem{Lemm}{Lemma} 
\newtheorem{Def}{Definition} 
\newtheorem{Ass}{Assumption}
\newtheorem  {examp}[Satz]     {Example}
\newcommand \N   {\mathbb{N}}
\newcommand \R   {\mathbb{R}}
\newcommand \K   {\mathcal{K}}
\newcommand \Kinf{\mathcal{K_\infty}}
\newcommand \PD  {\mathcal{P}}
\newcommand \KL  {\mathcal{KL}}
\newcommand \LL  {\mathcal{L}}
\newcommand \Iff   {\Leftrightarrow}
\newcommand{\lel}{\left\langle}
\newcommand{\rir}{\right\rangle}
\newcommand \eps {\varepsilon}
\title[\emph{i}ISS of bilinear systems] 
      {Characterizations of integral input-to-state stability for bilinear systems in infinite dimensions}
\author[Andrii Mironchenko and Hiroshi Ito]{}
\subjclass{Primary:  37C75, 93C25, 93D30 ; Secondary: 93C20, 93C10.}
 \keywords{Bilinear systems, infinite-dimensional systems, integral input-to-state stability, Lyapunov methods.}
 \email{andrii.mironchenko@uni-passau.de}
 \email{hiroshi@ces.kyutech.ac.jp}
\thanks{This work was supported by Japan Society for Promotion of Science (JSPS)}
\begin{document}
\maketitle

\centerline{\scshape Andrii Mironchenko }
\medskip
{\footnotesize
 \centerline{Department of Systems Design and Informatics}
   \centerline{Kyushu Institute of Technology}
   \centerline{680-4 Kawazu, Iizuka, Fukuoka 820-8502, Japan}
} 

\medskip

\centerline{\scshape Hiroshi Ito}
\medskip
{\footnotesize
 \centerline{Department of Systems Design and Informatics}
   \centerline{Kyushu Institute of Technology}
   \centerline{680-4 Kawazu, Iizuka, Fukuoka 820-8502, Japan}
}

\bigskip

 \centerline{(Communicated by the associate editor name)}

\begin{abstract}
For bilinear infinite-dimensional dynamical systems, 
we show the equivalence between uniform global asymptotic stability and integral input-to-state stability. We provide two proofs of this fact. One applies to general systems over Banach spaces. The other is restricted to Hilbert spaces, but is more constructive and results in an explicit form of iISS Lyapunov functions.  
\end{abstract}

\section{Introduction}

Stability and robustness are fundamental for control systems and typically they have been addressed within two different concepts. One is Lyapunov stability characterizing behavior of dynamical systems without inputs near equilibrium points. The other is input-output stability which neglects information about the state of a system and studies response of a system to external inputs. 
	In many cases it is not satisfactory to rely on one of them only. 
Input-to-state stability (ISS) \cite{Son89} unified these two concepts and provided powerful tools for stability analysis of ordinary differential equations (ODEs) such as ISS Lyapunov functions \cite{SoW95}, methods for design of nonlinear control systems \cite{KoA01,FrK08}, and the ISS small-gain theorem \cite{JTP94,JMW96} which has been extensively utilized both in theory and in applications to establish stability and robustness of networks of ISS systems \cite{DRW07,JiW08}.
%
%
The usefulness of ISS is widely recognized for ODE systems and led to generalizations, sophistications and abstractions to cover other types of control systems, such as time-delay systems, discrete-time and hybrid systems 
as well as trajectory-based systems (e.g. \cite{JiW01,PeJ06,InS02,KaJ11} to name a few).

The study of ISS of general infinite-dimensional systems and in particular 
of partial differential equations (PDEs) started relatively recently. 
In \cite{JLR08}, \cite{DaM13}, \cite{DaM13b}, \cite{Log13}, ISS of infinite-dimensional systems
\begin{equation}
\label{InfiniteDim}
\dot{x}(t)=Ax(t)+f(x(t),u(t)), \ x(t) \in X, u(t) \in U,
\end{equation}
has been addressed via methods of semigroup theory \cite{JaZ12}, \cite{CuZ95}. 
Here the state space $X$ and the space of input values $U$ are Banach spaces, $A:D(A) \to X$ is the generator of a $C_0$-semigroup over $X$ and $f:X \times U \to X$ is Lipschitz w.r.t. the first argument on any bounded set. Many classes of evolution PDEs, such as parabolic and hyperbolic PDEs are of this kind \cite{Hen81}, \cite{CaH98}.

In \cite{DaM13} sufficient conditions and a Lyapunov-based small-gain theorem for ISS of systems \eqref{InfiniteDim} have been developed, resulting in an efficient method to construct Lyapunov functions for interconnections of ISS systems and in this way to prove ISS of the interconnections.
The results from \cite{DaM13} have been transferred to not necessarily ISS impulsive systems and their interconnections in \cite{DaM13b}.

Frequency-domain methods have been applied to systems \eqref{InfiniteDim} with linear function $f$ in \cite{Log13} and to systems with sector-bounded nonlinearities in \cite{JLR08}.
A substantial effort has been devoted to constructions of ISS Lyapunov functions for nonlinear parabolic systems over $L_2$ spaces in \cite{MaP11}.
In \cite{PrM12} the construction of ISS-Lyapunov functions for time-variant linear systems of hyperbolic equations (balance laws) has been provided and these results have been applied to design a stabilizing boundary feedback
control for the Saint--Venant--Exner equations.
In \cite{DaM10} ISS of some classes of monotone parabolic systems has been considered.

%

In spite of powerful tools developed within ISS theory for ODEs, requirement of an ISS property is often too restrictive for practical systems, since in many cases boundedness of their trajectories is not guaranteed in the presence of inputs, i.e. their trajectories grow to infinity for inputs of large enough (but finite) magnitude. Such a situation is usual in biochemical processes, population dynamics, traffic flows etc. due to saturation and limitations in actuators and processing rates. Such systems are never ISS, but many of them enjoy a weaker robustness property, called integral input-to-state stability (iISS) \cite{Son98}. In \cite{ASW00} a Lyapunov type necessary and sufficient condition for an ODE system to be iISS has been proved. In \cite{Ito06,ItJ09,IJD13,AnA07,KaJ12} small-gain theorems for interconnections whose subsystems are not necessarily ISS have been developed. 


It is well-known that for linear ODE systems the notions of ISS and iISS coincide.
In \cite{Son98} it was proved by a direct construction of an iISS Lyapunov function, that bilinear ODE systems with Hurwitz autonomous matrices are always iISS, although many of them are not ISS. Bilinear systems have allowed us to understand a basic class of pure iISS systems and provided clues to dealing with more complicated iISS systems \cite{Son98,Ito10}.

In this work we are going to generalize the result from \cite{Son98} to bilinear infinite-dimensional systems \eqref{InfiniteDim} with bounded bilinear operators. On this way several difficulties arise. One of them is that the construction of Lyapunov functions, extending the original technique from \cite{Son98} directly works for systems with a Hilbert state space only. Therefore in order to prove equivalence between iISS and uniform global asymptotic stability for systems over Banach spaces, this paper develops a different method. Another difficulty is a need to use various density arguments, since the direct check of the properties of Lyapunov functions on the whole state space is often not possible. The work leads to an observation that non-uniformity over the spatial variables of a PDE system can make ISS fragile, while the system remains iISS.

The structure of the paper is as follows. 
Having introduced basic stability notions in Section~\ref{Problem_Formulation}, we define in Section~\ref{LyapFunk} iISS Lyapunov functions and 
prove that existence of an iISS Lyapunov function for a system \eqref{InfiniteDim} implies iISS of this system. ISS Lyapunov functions are also defined in a dissipative form so that they become a special case of iISS Lyapunov functions, while ISS Lyapunov functions are defined in an implicative form in \cite{DaM13}. In Section~\ref{LyapFunk} we show that both formulations of ISS Lyapunov functions coincide, provided that the operator $A$ generates an analytic semigroup and certain additional conditions on a nonlinearity hold. 
In Section~\ref{SectLinSys} we investigate iISS of bilinear systems. After a short discussion of infinite-dimensional linear systems, we prove that bilinear systems with bounded input operators which are uniformly globally asymptotically stable for a zero input are necessarily iISS. First we address this question for systems whose state space is an arbitrary Banach space. Next  we establish the abovementioned theorem for Hilbert spaces, which results in an explicit construction of an iISS Lyapunov function for bilinear systems. 
We illustrate our findings on an example of a parabolic system in Section~\ref{sec:Example} 
and conclude the paper in Section~\ref{sec:Conclusion}. In Appendix we prove two technical results.\footnote{The shortened preliminary version of this paper was published in
the 53rd IEEE Conference on Decision and Control \cite{MiI14}.}

We use the following notation throughout the paper. For linear normed spaces $X,Y$ let $L(X,Y)$ be the space of bounded linear operators from $X$ to $Y$ and $L(X):=L(X,X)$. A norm in these spaces we denote by $\| \cdot \|$. 
By $C_b(X,Y)$ we denote the space of bounded continuous functions from $X$ to $Y$, equipped with the standard $\sup$-norm, $C_b(X):=C(X,X)$.

We define $\R:=(-\infty,\infty)$ and $\R_+:=[0,\infty)$. Let $\N$ denote the set of natural numbers. Let $L_p(0,d)$, $p \geq 1$ be a space of $p$-th power integrable functions $f:(0,d) \to \R$ with the norm $\|f\|_{L_p(0,d)}=\left( \int_0^d{|f(x)|^p dx} \right)^{\frac{1}{p}}$.

\section{Problem formulation}
\label{Problem_Formulation}

Consider a system \eqref{InfiniteDim} and assume throughout the paper that $X$ and $U$ are Banach spaces and
$f(0,0)=0$, i.e., $x \equiv 0$ is an equilibrium point of the unforced system \eqref{InfiniteDim}. 
Let $\phi(t,\phi_0,u)$ denote the state of a system 
\eqref{InfiniteDim} at moment $t\in\R_+$ associated with 
an initial condition $\phi_0\in X$ at $t=0$, and input $u\in U_c$, 
where $U_c$ is a linear normed space of admissible inputs equipped 
with a norm $\|\cdot\|_{U_c}$. 


We use the following classes of comparison functions
\begin{equation*}
\begin{array}{ll}
{\PD} &:= \left\{\gamma:\R_+\rightarrow\R_+\left|\ \gamma\mbox{ is continuous, }  \gamma(0)=0 \mbox{ and } \gamma(r)>0 \mbox{ for } r>0 \right. \right\} \\
{\K} &:= \left\{\gamma \in \PD \left|\ \gamma \mbox{ is strictly increasing}  \right. \right\}\\
{\K_{\infty}}&:=\left\{\gamma\in\K\left|\ \gamma\mbox{ is unbounded}\right.\right\}\\
{\LL}&:=\{\gamma:\R_+\rightarrow\R_+\left|\ \gamma\mbox{ is continuous and strictly decreasing with } \lim\limits_{t\rightarrow\infty}\gamma(t)=0\right. \}\\
{\KL} &:= \left\{\beta:\R_+\times\R_+\rightarrow\R_+\left|\ \beta \mbox{ is continuous, } \beta(\cdot,t)\in{\K},\ \forall t \geq 0, \  \beta(r,\cdot)\in {\LL},\ \forall r > 0\right.\right\} \\
\end{array}
\end{equation*}
%

We consider mild solutions of \eqref{InfiniteDim}, i.e. solutions of the integral equation
\begin{equation}
\label{InfiniteDim_Integral_Form}
x(t)=T(t)x(0) + \int_0^t T(t-s)f(x(s),u(s))ds. 
\end{equation}
belonging to the class $C([0,\tau],X)$ for some $\tau>0$.
Here $\{T(t),\ t \geq 0\}$ is a $C_0$-semigroup on a Banach space $X$
with an infinitesimal generator $A:D(A) \to X$, $Ax =\lim\limits_{t \rightarrow +0}{\frac{1}{t}(T(t)x-x)}$.

\begin{Def}
We call $f:X \times U \to X$ Lipschitz continuous on bounded subsets of $X$, uniformly w.r.t. the second argument if $\forall w>0 \; \exists L(w)>0$, such that $\forall x,y: \|x\|_X \leq w,\ \|y\|_X \leq w$, $\forall v \in U$
\begin{eqnarray}
\|f(y,v)-f(x,v)\|_X \leq L(w) \|y-x\|_X.
\label{eq:Lipschitz}
\end{eqnarray}
\end{Def}

We will use the following assumption concerning nonlinearity $f$ throughout the paper
\begin{Ass}
\label{Assumption1}
We assume that $f:X \times U \to X$ is Lipschitz continuous on bounded subsets of $X$, uniformly w.r.t. the second argument and that $f(x,\cdot)$ is continuous for all $x \in X$.
\end{Ass}
Assumption~\ref{Assumption1} ensures that the mild solution of \eqref{InfiniteDim} exists and is unique, according to a variation of a classical existence and uniqueness theorem \cite[Proposition 4.3.3]{CaH98}.

Next we introduce stability properties for the system \eqref{InfiniteDim}.
\begin{Def}
System \eqref{InfiniteDim} is {\it globally asymptotically
stable at zero uniformly with respect to state} (0-UGASs), if $\exists \beta \in \KL$, such that $\forall \phi_0 \in X$, $\forall t\geq 0$ it holds
\begin{equation}
\label{UniStabAbschaetzung}
\left\| \phi(t,\phi_0,0) \right\|_{X} \leq  \beta(\left\| \phi_0 \right\|_{X},t) .
\end{equation}
%
\end{Def}
If undisturbed (for $u=0$) system \eqref{InfiniteDim} is merely locally stable and globally attractive, then \eqref{InfiniteDim} is called 0-GAS (which is a weaker notion than 0-UGASs).

%


Stability properties of \eqref{InfiniteDim} with respect to external inputs can be studied by means of the notion of input-to-state stability \cite{DaM13}:
\begin{Def}
\label{Def:ISS}
System \eqref{InfiniteDim} is called {\it input-to-state stable
(ISS) w.r.t. space of inputs $U_c$}, if there exist $\beta \in \KL$ and $\gamma \in \K$ 
such that the inequality
\begin {equation}
\label{iss_sum}
\begin {array} {lll}
\| \phi(t,\phi_0,u) \|_{X} \leq \beta(\| \phi_0 \|_{X},t) + \gamma( \|u\|_{U_c})
\end {array}
\end{equation}
holds $\forall \phi_0 \in X$, $\forall u\in U_c$ and $\forall t\geq 0$.

We emphasize that the above definition does not yet exactly 
correspond to ISS of finite dimensional systems \cite{Son08} since 
Definition \ref{Def:ISS} allows flexibility in the choice of the space $U_c$, which reflects a kind of dependence of inputs on time. 
\textit{System \eqref{InfiniteDim} is called ISS}, without expressing the 
normed space of inputs explicitly, if it is ISS w.r.t. $U_c=C_b(\R_+,U)$ 
endowed with a usual supremum norm. 
This terminology follows that of ISS for finite dimensional systems. 

\end{Def}


The central notion in this paper is
\begin{Def}
\label{Def:iISS}
System \eqref{InfiniteDim} is called integral input-to-state stable (iISS) if there exist $\theta \in \Kinf$, $\mu \in \K$ and $\beta \in \KL$ 
such that the inequality 
\begin{equation}
\label{iISS_Estimate}
\|\phi(t,\phi_0,u)\|_X \leq \beta(\|\phi_0\|_X,t) + 
\theta\left(\int_0^t \mu(\|u(s)\|_U)ds\right)
\end{equation}
holds $\forall \phi_0 \in X$, $\forall u\in U_c=C_b(\R_+,U)$ and $\forall t\geq 0$.
\end{Def}
Note that for iISS we did not want to introduce the freedom to choose spaces of input
functions which are not considered by this paper.

In the next section we will provide a Lyapunov sufficient condition for iISS of systems \eqref{InfiniteDim}.


\section{Lyapunov characterization of {\rm i}ISS}
\label{LyapFunk}

\begin{Def}\label{def:iISSV}
A continuous function $V:X \to \R_+$ is called an \textit{iISS Lyapunov function},  if there exist
$\psi_1,\psi_2 \in \Kinf$, $\alpha \in \PD$ and $\sigma \in \K$ 
such that 
\begin{equation}
\label{LyapFunk_1Eig}
\psi_1(\|x\|_X) \leq V(x) \leq \psi_2(\|x\|_X), \quad \forall x \in X
\end{equation}
and Lie derivative of $V$ along the trajectories of the system \eqref{InfiniteDim} satisfies 
\begin{equation}
\label{DissipationIneq}
\dot{V}_u(x) \leq -\alpha(\|x\|_X) + \sigma(\|u(0)\|_U)
\end{equation}
for all $x \in X$ and $u\in U_c$, 
where the Lie derivative of $V$ corresponding to the input $u$ is defined by
\begin{equation}
\label{LyapAbleitung}
\dot{V}_u(x)=\limsup \limits_{t \rightarrow +0} {\frac{1}{t}(V(\phi(t,x,u))-V(x)) }.
\end{equation}
Furthermore, if 
\begin{equation}
\label{eq:ISSalphsig}
\lim_{\tau\to\infty}\alpha(\tau)=\infty 
\,\mbox{ or }\, 
\liminf_{\tau\to\infty}\alpha(\tau)\geq
\lim_{\tau\to\infty}\sigma(\tau)
\end{equation}
holds, then $V$ is called an \textit{ISS Lyapunov function} for \eqref{InfiniteDim}. 
\end{Def}

\begin{remark}
For verification of stability properties like 0-UGASs, iISS and ISS, it suffices to assume $V$ continuous with respect to $x$ \cite[p. 84]{Hen81}. Thus, $t \mapsto V(x(t))$ is only continuous as well and the derivative of this function may not exist in the classical or 'almost everywhere' sense. However, the $\limsup$ in \eqref{LyapAbleitung} still exists, although it is possible that $\dot{V}_u(x) = -\infty$ or $\dot V_u(x)=\infty$. The latter case is excluded by \eqref{DissipationIneq}.
Still, for the ease of computation of $\dot{V}_u(x)$ it is common to pick $V$ locally Lipschitz in $x$. 
\end{remark}

\begin{remark}
In the Definition~\ref{def:iISSV} we give an estimate for a directional derivative of $V(\cdot)$ at any state $x$ and for any input $u$, which will be applied to this state. If $V$ is differentiable, it is possible to compute the derivatives along the trajectory not only at time $0$, but also at any time $t$:
\begin{eqnarray*}
\frac{d}{dt} V (\phi(t, \phi_0, u)) &=& \lim \limits_{\tau \rightarrow 0} {\frac{1}{\tau}\Big(V(\phi(t+\tau,\phi_0,u))-V(\phi(t,\phi_0,u))\Big) }\\
&=& \limsup\limits_{\tau \rightarrow +0} {\frac{1}{\tau}\Big(V(\phi(\tau,\phi(t,\phi_0,u),u_t))-V(\phi(t,\phi_0,u))\Big) }\\
&=& \dot{V}_{u_t}(\phi(t, \phi_0, u)).
\end{eqnarray*}
Here we have exploited the semigroup property $\phi(t+\tau,\phi_0,u)=\phi(\tau,\phi(t,\phi_0,u),u_t)$,
where $u_t(s) = u(t + s)$ for $s,t \geq 0$.
\end{remark}

Since Definition 5 assumes merely that $V$ is continuous, $V(\phi(t,\phi_0,u))$ is not guaranteed to be absolutely continuous in $t$. First, we shall confirm that the merely continuous function $V$ can give a traditional tool for estimating trajectories.  This will be done by extending of the comparison principles \cite[Lemma 4.4]{LSW96} and \cite[Corollary IV.3]{ASW00}. 

For this purpose, for any continuous function $y: \R\to\R$, let $\dot{y}$ denote the right upper Dini derivative of $y$, i.e. $\dot{y}(t):=\limsup_{h \to +0}\frac{y(t+h)-y(t)}{h}$.
The extension of \cite[Lemma 4.4]{LSW96} is as follows (the proof is postponed to Appendix~\ref{app:Comparison_Principle})
\begin{Lemm}
\label{thm:ComparisonPrinciple}
Let $y: \R_+ \to \R_+$ be a continuous function defined for all $t \geq 0$ and satisfying differential inequality
\begin{eqnarray}
\dot{y} \leq -\alpha(y(t)) \quad \forall t>0,
\label{eq:ComparisonPrinciple}
\end{eqnarray}
for some continuous and positive definite function $\alpha$.
Then there exists a $\beta \in \KL$ so that
\begin{eqnarray}
y(t) \leq \beta(y(0),t) \quad \forall t \geq 0.
\label{eq:ComparisonPrinciple_FinalEstimate}
\end{eqnarray}
\end{Lemm}

By virtue of this lemma we obtain an extension of \cite[Corollary IV.3]{ASW00}:
\begin{corollary}
\label{thm:ComparisonPrinciple-2}
Let $\tilde{t} \in (0,\infty]$ and let $y: [0,\tilde{t}) \to \R_+$ be a continuous function satisfying differential inequality
\begin{eqnarray}
\dot{y} \leq -\alpha(y(t)) + v(t) \quad \forall t \in (0,\tilde{t}),
\label{eq:ComparisonPrinciple-2}
\end{eqnarray}
for some continuous and positive definite function $\alpha$ and some measurable locally essentially bounded function 
$v:[0,\tilde{t}) \to \R_+$. 
Then for all $t \in [0,\tilde{t})$ it holds that
\begin{eqnarray}
y(t) \leq \beta(y(0),t) + \int_0^t 2 v(s)ds.
\label{eq:Comparison-2}
\end{eqnarray}
\end{corollary}
The proof of Corollary~\ref{thm:ComparisonPrinciple-2} goes along the lines of \cite[Corollary IV.3]{ASW00} by using Lemma~\ref{thm:ComparisonPrinciple} instead of \cite[Lemma 4.4]{LSW96}. This result helps to prove the following:

\begin{proposition}\label{PropSufiISS}
If there exist an iISS (resp. ISS) Lyapunov function for \eqref{InfiniteDim}, then \eqref{InfiniteDim} is iISS (resp. ISS).
%
\end{proposition}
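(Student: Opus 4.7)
The plan is to adapt the finite-dimensional iISS Lyapunov argument of Angeli--Sontag--Wang to the present weak-solution setting, in which $V$ is only continuous and trajectories $\phi(\cdot,\phi_0,u)$ lie in $C([0,\tau],X)$. Fix $\phi_0\in X$ and $u\in U_c$, and set $y(t):=V(\phi(t,\phi_0,u))$. Using continuity of $V$ together with the cocycle identity $\phi(t+s,\phi_0,u)=\phi(s,\phi(t,\phi_0,u),u(\cdot+t))$, the definition (\ref{LyapAbleitung}) of $\dot V_u$ yields the pointwise bound $D^+ y(t)\le -\alpha(\|\phi(t)\|_X)+\sigma(\|u(t)\|_U)$ on the upper right Dini derivative; a standard Dini-comparison argument then promotes this to
\[
y(t)\le V(\phi_0)+\int_0^t\bigl[-\alpha(\|\phi(s)\|_X)+\sigma(\|u(s)\|_U)\bigr]\,ds.
\]

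Dropping the nonpositive $\alpha$-term and invoking the sandwich (\ref{LyapFunk_1Eig}) immediately yields the energy-to-state bound $\|\phi(t,\phi_0,u)\|_X\le \psi_1^{-1}\bigl(\psi_2(\|\phi_0\|_X)+\int_0^t\sigma(\|u(s)\|_U)\,ds\bigr)$, which produces in particular uniform boundedness of trajectories on any time interval on which the input has finite energy. In the ISS case one exploits (\ref{eq:ISSalphsig}) instead: one produces $\chi\in\Kinf$ such that $\alpha(r)\ge 2\sigma(\chi^{-1}(r))$ for all $r$ in the relevant range, so on $\{t:\|\phi(t)\|_X\ge\chi(\|u\|_{U_c})\}$ one has $D^+ y(t)\le -\tfrac12\alpha(\|\phi(t)\|_X)$, and a routine $\KL$-comparison argument delivers (\ref{iss_sum}).

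For iISS, I would combine the energy bound with a $\KL$-type decay estimate of $y$ along trajectories. Since $\alpha\in\PD$ is continuous and positive on $(0,\infty)$ but need not be radially unbounded, we use Sontag's device: for each $R>0$ and $c\in(0,R)$, $\alpha\circ\psi_2^{-1}$ is bounded below on $[c,\psi_2(R)]$ by some $\alpha_{c,R}>0$, so that on time intervals on which $y(t)\in[c,\psi_2(R)]$ the dissipation inequality forces a definite positive decay of $V$ whenever $\sigma(\|u(t)\|_U)$ is correspondingly small. Feeding the uniform-in-$t$ upper bound on $\|\phi(t)\|_X$ from the previous paragraph into a comparison with the scalar system $\dot z=-\tilde\alpha(z)$ then yields $\beta\in\KL$, $\theta\in\Kinf$ and $\mu\in\K$ (one may in fact take $\mu=\sigma$) realizing (\ref{iISS_Estimate}).

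The principal obstacle is the passage from the pointwise Lie-derivative bound (\ref{LyapAbleitung}) to the integral inequality for $y$: in the ODE case this is automatic from absolute continuity of a smooth $V$ along smooth solutions, but here $V$ is merely continuous and $\phi(\cdot,\phi_0,u)$ is only weakly continuous in $t$. The step crucially relies on Lipschitz-on-bounded-sets continuity of $f$ and continuity of $t\mapsto\phi(t,\phi_0,u)$ guaranteed by Assumption~\ref{Assumption1}, so that the $\limsup$ defining $\dot V_u$ transfers uniformly in $t$ on bounded time intervals into a bound on $D^+ y$ to which a Dini-type comparison can be applied; everything downstream is then a standard manipulation of comparison functions.
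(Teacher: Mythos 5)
Your overall strategy coincides with the paper's: set $y(t)=V(\phi(t,\phi_0,u))$, convert the Lie-derivative bound \eqref{DissipationIneq} into a scalar differential inequality $D^+y(t)\le-\alpha(\psi_2^{-1}(y(t)))+\sigma(\|u(t)\|_U)$ via the cocycle identity, and run the Angeli--Sontag--Wang comparison machinery to extract a $\KL$-plus-integral bound on $y$. The only structural difference is that the paper simply cites \cite[Corollary IV.3]{ASW00} for the step you re-derive by hand; your explicit handling of the Dini-derivative passage (the paper writes $\dot y(t)\le\cdots$ without comment) is if anything more careful. The iISS branch of your argument is therefore sound in outline, modulo one omission: your integral inequality and the resulting bound $\|\phi(t)\|_X\le\psi_1^{-1}\bigl(\psi_2(\|\phi_0\|_X)+\int_0^t\sigma(\|u(s)\|_U)\,ds\bigr)$ are a priori valid only on the maximal interval $[0,\tau)$ of the weak solution, whereas \eqref{iISS_Estimate} is required for all $t\ge0$. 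You must still invoke the blow-up alternative under Assumption~\ref{Assumption1} (the analogue of \cite[Theorem 4.3.4]{CaH98}) to conclude from boundedness that $\tau=\infty$; the paper does this explicitly and you should too.

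The ISS branch, however, contains a genuine error. Condition \eqref{eq:ISSalphsig} permits the borderline case $\liminf_{\tau\to\infty}\alpha(\tau)=\lim_{\tau\to\infty}\sigma(\tau)=L<\infty$, and there no $\chi\in\Kinf$ with $\alpha(r)\ge2\sigma(\chi^{-1}(r))$ for all large $r$ can exist: along a subsequence $r_k\to\infty$ with $\alpha(r_k)\to L$ one would need $L\ge 2L$, impossible for $L>0$. The factor $2$ must be replaced by a multiplier tending to $1$, i.e.\ the gain has to be built as $\chi=\hat\alpha^{-1}\circ(\mathrm{id}+\omega)\circ\sigma$ with $\omega(\sigma(s))>0$ and $\omega(\sigma(s))\to0$, exactly as in \cite{SoW95} and in the paper's Appendix~\ref{app:Implication_Dissipation}. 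A related detail: $\alpha\in\PD$ is not assumed monotone even under \eqref{eq:ISSalphsig}, so the inference $\|\phi(t)\|_X\ge\chi(\|u\|_{U_c})\Rightarrow\alpha(\|\phi(t)\|_X)\ge\alpha(\chi(\|u\|_{U_c}))$ implicit in your decay estimate requires first replacing $\alpha$ by a class-$\K$ minorant $\hat\alpha\le\alpha$ still satisfying $\lim\hat\alpha\ge\lim\sigma$. With these two repairs (and the global-existence remark above) your proof goes through.
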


\begin{proof}
Let $\tau \in (0,\infty]$ be such that $[0,\tau)$ is the maximal time interval 
over which a system \eqref{InfiniteDim} admits a solution. 
For a given initial condition $\phi_0\in X$ and 
a given input $u\in U_c$, let $y(t)=V(x(t))$, where $x(t)=\phi(t,\phi_0,u)$. 
With the help of \eqref{LyapFunk_1Eig}, 
definition \eqref{LyapAbleitung} and property 
\eqref{DissipationIneq} imply 
\begin{align*}
\dot{y}(t) \leq -\alpha(\psi_2^{-1}(y(t)))
+ \sigma(\|u(t)\|_U)  
\end{align*}
for each $t\in [0,\tau)$.  
From Corollary~\ref{thm:ComparisonPrinciple-2} it follows the existence of 
$\hat{\beta}\in\KL$ satisfying 
\begin{align*}
y(t) \leq \hat{\beta}(y(0),t)+\int_0^t
2\sigma(\|u(s)\|_U)ds . 
\end{align*}
Again, using \eqref{LyapFunk_1Eig} we have 
\begin{align}
\label{iISSprop_in_y}
\psi_1(\|x(t)\|_X) \leq \tilde{\beta}(\|x(0)\|_X,t)
+\int_0^t 2\sigma(\|u(s)\|_U)ds ,   
\end{align}
where $\tilde{\beta}=\hat{\beta}(\psi_2(\cdot),\cdot)\in\KL$. 
Since $\psi_1$ is of class $\K_\infty$ and satisfies 
$\psi_1^{-1}(a+b)\leq \psi_1^{-1}(2a)+\psi_1^{-1}(2b)$ for 
any $a,b\in\R_+$, 
we arrive at \eqref{iISS_Estimate} 
for all $t\in[0,\tau]$ 
with $\theta(s)=\psi_1^{-1}(2s)$ and $\mu(s)=2\sigma(s)$, 
and $\beta(s,r)=\psi_1^{-1}(2\tilde{\beta}(s,r))$.

Assume that $\tau$ is finite. Then, since Assumption~\ref{Assumption1} holds, the variation of \cite[Theorem 4.3.4]{CaH98}
implies that the solution $x(\cdot)$ is unbounded near $t=\tau$, which is false due to \eqref{iISSprop_in_y}. 
%
Therefore, $\tau=+\infty$ and property \eqref{iISS_Estimate} holds for 
all $t\geq 0$, and thus \eqref{InfiniteDim} is iISS.

Finally, we can prove ISS when \eqref{eq:ISSalphsig} holds as in the finite-dimensional case \cite{SoW95}, with the help of 
\cite[Theorem 1 and Proposition 5]{DaM13}
and the technique used in the last paragraph of the proof of Theorem~\ref{thm:Dissipative_Implicative}.
 %
\end{proof}

In Definition \ref{def:iISSV} 
the notion of ISS Lyapunov function is introduced in a so-called dissipative form 
in parallel to the notion of iISS Lyapunov function. 
In the preceding study \cite{DaM13} on ISS, however, a so-called implicative 
form has been used:
\begin{Def}
A continuous function $V:X \to \R_+$ is called an 
\textit{ISS Lyapunov function in an implicative form}, if there exist
$\psi_1,\psi_2 \in \Kinf$, $\eta \in \Kinf$ and $\gamma \in \K$ 
such that \eqref{LyapFunk_1Eig} holds and system \eqref{InfiniteDim} satisfies 
\begin{equation}
\label{GainImplikation}
\|x\|_X \geq \gamma(\|u(0)\|_U) \Rightarrow   \dot{V}_u(x) \leq -\eta(\|x\|_X) 
\end{equation}
for all $x \in X$ and $u\in U_c$.
\end{Def}
For finite-dimensional systems existence of an ISS Lyapunov function in a dissipative form is equivalent to existence of an ISS Lyapunov function 
in an implicative form, see \cite[Remark 2.4, p. 353]{SoW95} and \cite{SoT95}. 
For infinite-dimensional systems, this statement holds under some additional assumptions on nonlinearity $f$.

\begin{Satz}
\label{thm:Dissipative_Implicative}
Let nonlinearity $f$ and Lyapunov function candidate $V$ satisfy the conditions
\begin{enumerate}
	\item $f$ be Lipschitz on bounded subsets of $X$ uniformly w.r.t. the second argument.
  \item $A$ generate an analytic semigroup.
	\item $V$ be Fr\'echet differentiable in $X$ and its derivative $\frac{\partial V}{\partial x}$ be bounded on bounded balls. 
	\item $f$ and $V$ admit the existence of $p \in \K$ and $q \in \Kinf$ satisfying
\begin{align}
\left\| \frac{\partial V}{\partial x}f(0,v)\right\|_X \hspace{-1.3ex}\leq p(\| x\|_X)+q(\| v\|_U) 
, \ \forall x \in X, \ v \in U . 
\label{eq:Boundedness_wrt_u}
\end{align}
	\end{enumerate}
If $V$ is an ISS Lyapunov function in an implicative form for \eqref{InfiniteDim}, then $V$ is also an ISS Lyapunov function for \eqref{InfiniteDim} in a dissipative form. And if $V$ is an ISS Lyapunov function for \eqref{InfiniteDim} in a dissipative form, then there exist an ISS Lyapunov function (possibly different from $V$) for \eqref{InfiniteDim} in an implicative form.

\end{Satz}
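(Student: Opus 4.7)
The plan is to prove the two implications separately, with the heavy lifting in the implicative-to-dissipative direction.

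For \emph{dissipative $\Rightarrow$ implicative}, no further assumptions beyond the dissipation inequality and the ISS condition \eqref{eq:ISSalphsig} are needed. If $\alpha\in\Kinf$, setting $\gamma:=\alpha^{-1}\circ(2\sigma)$ and $\eta:=\alpha/2$ gives, whenever $\|x\|_X\geq\gamma(\|u(0)\|_U)$, that $-\alpha(\|x\|_X)+\sigma(\|u(0)\|_U)\leq -\eta(\|x\|_X)$. Under the alternative clause of \eqref{eq:ISSalphsig}, where $\alpha$ may be bounded but its eventual value dominates $\lim_{\tau\to\infty}\sigma(\tau)$, I would build $\gamma$ and $\eta$ by a standard comparison-function construction essentially identical to the finite-dimensional one of Sontag--Teel.

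For \emph{implicative $\Rightarrow$ dissipative}, I first restrict to $x\in D(A)$. Because $A$ generates an analytic semigroup and $f$ is Lipschitz on bounded balls (uniformly in $u$), $\phi(\cdot,x,u)$ is a classical solution near $t=0$, so Fréchet differentiability of $V$ yields
\begin{equation*}
\dot V_u(x)=\tfrac{\partial V}{\partial x}(x)\bigl(Ax+f(x,u(0))\bigr),\qquad x\in D(A).
\end{equation*}
Applying the implicative hypothesis with $u\equiv 0$ and $\gamma(0)=0$ gives $\dot V_0(x)\leq -\eta(\|x\|_X)$ for all $x\in D(A)$, hence
\begin{equation*}
\dot V_u(x)\leq -\eta(\|x\|_X)+\tfrac{\partial V}{\partial x}(x)\bigl[f(x,u(0))-f(x,0)\bigr].
\end{equation*}
Splitting the bracket as $[f(x,u(0))-f(0,u(0))]+f(0,u(0))-[f(x,0)-f(0,0)]$ (using $f(0,0)=0$), the first and third pieces are controlled by the Lipschitz bound (1) times the local bound on $\|\partial V/\partial x\|$ supplied by (3), while the middle piece is controlled by \eqref{eq:Boundedness_wrt_u}. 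The remainder is therefore majorized by $\tilde p(\|x\|_X)+q(\|u(0)\|_U)$ for a suitable $\tilde p\in\K$. A case split on $\|x\|_X$ versus $\gamma(\|u(0)\|_U)$—the first case handled directly by the implicative form, the second by monotonicity of $\tilde p$ so that $\tilde p(\|x\|_X)\leq\tilde p(\gamma(\|u(0)\|_U))$—delivers the dissipative estimate on $D(A)$ with $\alpha:=\eta$ and $\sigma(r):=\eta(\gamma(r))+\tilde p(\gamma(r))+q(r)$.

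The main obstacle will be extending this pointwise bound from $D(A)$ to all of $X$, because for $x\notin D(A)$ the formula used to compute $\dot V_u(x)$ is meaningless ($Ax$ is undefined) and the $\limsup$ in \eqref{LyapAbleitung} does not interact nicely with approximation. My strategy is to pass to the integrated form
\begin{equation*}
V(\phi(t,x,u))-V(x)\leq \int_0^t\bigl[-\alpha(\|\phi(s,x,u)\|_X)+\sigma(\|u(s)\|_U)\bigr]ds,
\end{equation*}
established first for $x\in D(A)$ by integrating the pointwise inequality along the classical solution, and then extended to arbitrary $x\in X$ by approximating $x$ with a sequence $x_n\in D(A)$ and invoking continuous dependence of mild solutions on initial data together with continuity of $V$, $\alpha$, and $\sigma$. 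Dividing by $t$ and taking $\limsup_{t\to 0^+}$ then recovers the pointwise dissipative form on all of $X$. This density-and-integration passage, rather than the algebra on $D(A)$, is where the technical weight of the proof lies.
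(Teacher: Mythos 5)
Your proposal is correct and follows essentially the same route as the paper's proof: the same decomposition of $f(x,u(0))-f(x,0)$ controlled by the Lipschitz bound, the boundedness of $\frac{\partial V}{\partial x}$ on bounded balls, and \eqref{eq:Boundedness_wrt_u}, the same case split against $\gamma(\|u(0)\|_U)$ with $\alpha:=\eta$, and the standard Sontag--Teel comparison-function argument for the converse direction. The only (immaterial) difference lies in how the estimate is extended from $D(A)$ to $X$: you integrate the inequality and approximate the initial state, whereas the paper applies the mean value theorem to $t\mapsto V(x(t))$ along the trajectory for $t>0$; both hinge on the same smoothing property of the analytic semigroup.
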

The proof is given in Appendix~\ref{app:Implication_Dissipation}. 

\begin{remark}
Importantly, for infinite-dimensional systems, posing 
\eqref{eq:Boundedness_wrt_u} is fundamentally less restrictive than 
$\|f(0,v)\|_X\leq q(\| v\|_U)$ since 
norms in infinite-dimensional spaces are in general not equivalent, as opposed to 
finite-dimensional systems.
\end{remark}

When considering the approximations of dynamical systems, the following property is useful
\begin{Def}
We say that $\Sigma$ depends continuously on inputs and on initial states, if
$\forall x \in X, \forall u \in U_c, \forall \tau>0$ and $\forall \eps>0$ there exist $\delta=\delta(x,u,\tau,\eps)>0$, such that $\forall x' \in X: \|x-x'\|_X< \delta$ and 
$\forall u' \in U_c: \|u-u'\|_{U_c}< \delta$ it holds
\[
\|\phi(t,x,u)-\phi(t,x',u')\|_X< \eps \quad \forall t \in [0,\tau].
\]
If under the same assumptions 
\[
\|\phi(t,x,u)-\phi(t,x',u)\|_X< \eps \quad \forall t \in [0,\tau]
\]
holds, $\Sigma$ is called continuously dependent on initial states.
\end{Def}

In many cases it is hard to compute the derivative of a Lyapunov function for all $x \in X$  directly, but it is much more convenient to differentiate $V$ on some dense subspaces of $X$ and $U_c$, 
and to verify dissipation inequality \eqref{DissipationIneq} on the 
dense subspaces. This directly leads to iISS/ISS of a system over these dense subspaces provided they are invariant w.r.t. the flow of the system. We will show next, that under a
natural assumption, this already implies iISS of the system on 
the original state and input spaces. 
For this purpose, 
let $\Sigma:=(X,U_c,\phi)$ denote system \eqref{InfiniteDim} defined with 
the transition map $\phi$ corresponding to the spaces $X$, $U_c$. 
Let $\hat{\Sigma}:=(\hat{X},\hat{U}_c,\phi)$ be a restriction of a system \eqref{InfiniteDim} 
to the state space $\hat{X}$ and the input space $\hat{U}_c$
which are dense linear normed subspaces of $X$ and $U_c$, endowed with the norms in original spaces $\|\cdot\|_X$ and 
$\|\cdot\|_{U_c}$, respectively. 
In particular, this assumes that $\hat{X}$ is invariant under the flow $\phi$ for all inputs from 
$\hat{U}_c$.

Next we state two propositions, which will be used in the sequel. 

\begin{proposition}
\label{DensityArg}
Let $\Sigma$ depend continuously on inputs and on initial states and let $\hat{\Sigma}$ be iISS. Then $\Sigma$ is also iISS with the same $\beta, \theta$ and 
$\mu$ in the estimate \eqref{iISS_Estimate}.
\end{proposition}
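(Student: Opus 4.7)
The plan is to exploit density of $\hat X$ in $X$ and of $\hat U_c$ in $U_c$, combined with continuous dependence on initial data and inputs, to transfer the iISS estimate from $\hat\Sigma$ to $\Sigma$ one point at a time. Fix an arbitrary $\phi_0\in X$, $u\in U_c$ and $t\geq 0$. By density, pick sequences $\{x_n\}\subset \hat X$ and $\{u_n\}\subset \hat U_c$ with $\|x_n-\phi_0\|_X\to 0$ and $\|u_n-u\|_{U_c}\to 0$. Since $\hat\Sigma$ is iISS with constants $\beta,\theta,\mu$, for every $n$ and every $s\in[0,t]$
\begin{equation*}
\|\phi(t,x_n,u_n)\|_X \leq \beta(\|x_n\|_X,t) + \theta\!\left(\int_0^t \mu(\|u_n(s)\|_U)\,ds\right).
\end{equation*}

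The task is now to pass to the limit $n\to\infty$ on both sides. On the left-hand side, continuous dependence of $\Sigma$ on initial states and inputs gives $\|\phi(t,x_n,u_n)-\phi(t,\phi_0,u)\|_X\to 0$, and hence $\|\phi(t,x_n,u_n)\|_X\to \|\phi(t,\phi_0,u)\|_X$ by continuity of the norm. On the right-hand side, $\beta$ is continuous in its first argument, so $\beta(\|x_n\|_X,t)\to\beta(\|\phi_0\|_X,t)$.

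The delicate part is the integral term. Since $U_c$ is equipped with the sup-norm (as in Definition~\ref{Def:iISS}), $\|u_n-u\|_{U_c}\to 0$ yields $\sup_{s\geq 0}\bigl|\,\|u_n(s)\|_U-\|u(s)\|_U\bigr|\to 0$. In particular, $\|u_n(s)\|_U$ stays in a common bounded subset of $\R_+$ for all $n$ and $s\in[0,t]$, on which $\mu\in\K$ is uniformly continuous; therefore $\mu(\|u_n(s)\|_U)\to\mu(\|u(s)\|_U)$ uniformly on $[0,t]$, giving
\begin{equation*}
\int_0^t\mu(\|u_n(s)\|_U)\,ds \;\longrightarrow\; \int_0^t\mu(\|u(s)\|_U)\,ds.
\end{equation*}
Continuity of $\theta$ then transports the limit through $\theta$.

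Combining these three limit passages yields
\begin{equation*}
\|\phi(t,\phi_0,u)\|_X \leq \beta(\|\phi_0\|_X,t) + \theta\!\left(\int_0^t\mu(\|u(s)\|_U)\,ds\right),
\end{equation*}
with the same $\beta$, $\theta$, $\mu$ as for $\hat\Sigma$. Since $\phi_0\in X$, $u\in U_c$ and $t\geq 0$ were arbitrary, $\Sigma$ is iISS. The only real obstacle is the integral passage to the limit; it is handled cleanly by the sup-norm on $U_c$, but would require additional care (e.g.\ a dominated convergence argument with an explicit majorant) if $U_c$ were taken, say, as an $L^p$-type space instead.
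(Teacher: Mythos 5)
Your proof is correct and takes essentially the same route as the paper's: approximate $(\phi_0,u)$ by elements of the dense subspaces, apply the iISS estimate of $\hat\Sigma$, and pass to the limit using continuous dependence on initial states and inputs. The paper merely casts this as a contradiction argument and asserts without justification the continuity of $u\mapsto\theta\bigl(\int_0^{t}\mu(\|u(s)\|_U)\,ds\bigr)$, a point you verify explicitly via the sup-norm and uniform continuity of $\mu$ on bounded sets.
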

For the system $\hat{\Sigma}$ whose state space $\hat{X}$ is a linear
normed space, we define the notion of iISS in the same way as it is done in Definition 4 for systems on a Banach space $X$.

The proof of Proposition~\ref{DensityArg} is given in Appendix~\ref{app:Density Argument}.

Often only approximations of the state space are needed, therefore we state another useful proposition whose proof is analogous to the proof of Proposition~\ref{DensityArg}.
\begin{proposition}
\label{DensityArg_StateAppr}
Let $\Sigma$ depend continuously on initial states, $\hat{\Sigma}$ be iISS and $\hat{U}=U$. Then $\Sigma$ is also iISS with the same $\beta, \theta$ and $\mu$ in the estimate \eqref{iISS_Estimate}.
\end{proposition}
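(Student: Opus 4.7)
The plan is to mimic the proof of Proposition~\ref{DensityArg}, but because the input space is unchanged ($\hat{U}=U$, whence $\hat{U}_c=U_c$), only the initial state needs to be approximated. Consequently, the full continuous dependence on inputs is not needed; continuous dependence on initial states suffices.

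Fix arbitrary $\phi_0\in X$, $u\in U_c$, and $t\geq 0$. By density of $\hat{X}$ in $X$, I would pick a sequence $\{\phi_0^n\}\subset \hat{X}$ with $\phi_0^n\to \phi_0$ in $\|\cdot\|_X$. Since $u\in U_c=\hat{U}_c$, the iISS estimate for $\hat{\Sigma}$ applies to each pair $(\phi_0^n,u)$, giving
\[
\|\phi(t,\phi_0^n,u)\|_X \leq \beta(\|\phi_0^n\|_X,t) + \theta\!\left(\int_0^t \mu(\|u(s)\|_U)\,ds\right),\qquad n\in\N.
\]
Here it is crucial that $\Sigma$ and $\hat{\Sigma}$ share the same transition map $\phi$, so the solutions starting at $\phi_0^n$ coincide as elements of $X$.

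The next step is to pass to the limit $n\to\infty$. The assumption that $\Sigma$ depends continuously on initial states yields $\phi(t,\phi_0^n,u)\to \phi(t,\phi_0,u)$ in $X$, so the left-hand side converges to $\|\phi(t,\phi_0,u)\|_X$ by continuity of the norm. On the right-hand side, $\|\phi_0^n\|_X\to \|\phi_0\|_X$, and since $\beta\in\KL$ is continuous, $\beta(\|\phi_0^n\|_X,t)\to\beta(\|\phi_0\|_X,t)$; the integral term does not depend on $n$. This produces the estimate \eqref{iISS_Estimate} for $\Sigma$ with the same $\beta$, $\theta$, $\mu$, and since $\phi_0$, $u$, $t$ were arbitrary, $\Sigma$ is iISS.

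I do not anticipate a serious obstacle here: the argument is a routine density/continuity passage. The only subtlety worth double-checking is that the continuous dependence on initial states is required merely pointwise in $t$ (not uniformly), which is exactly what the definition supplies on the compact interval $[0,t]$. All other ingredients—continuity of $\beta(\cdot,t)$, $\|\cdot\|_X$, and the shared transition map between $\Sigma$ and $\hat{\Sigma}$—are immediate from the setup.
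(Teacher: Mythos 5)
Your proof is correct and takes essentially the same route as the paper: the paper proves the companion Proposition~\ref{DensityArg} by the contrapositive of exactly this density-plus-continuous-dependence argument (assuming the estimate fails by a margin $r>0$ and deriving a contradiction) and states that the present proposition is proved analogously, with only the initial state needing approximation since $\hat{U}_c=U_c$. Your direct limiting argument is just a cleaner presentation of the same idea.
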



\section{ {\rm i}ISS and ISS of bilinear systems}
\label{SectLinSys}

\subsection{Linear systems}

We will begin with a class of linear systems. 
Consider system \eqref{InfiniteDim} with $f(x(t),u(t))=Bu(t)$:
\begin{equation}
\label{LinSys}
\begin{array}{l}
{\dot{x}(t)=Ax(t)+Bu(t),} \\
x(0)=\phi_0,
\end{array}
\end{equation}
where $x:\R_+ \to X$, $u:\R_+ \to U$, and $B:U \to X$ is a linear operator. 

\begin{proposition}
\label{ISS_Lp_spaces}
Let $B \in L(U,X)$. Then,  
\eqref{LinSys} is 0-UGASs $\Iff$ \eqref{LinSys} is ISS $\Iff$ \eqref{LinSys} is ISS w.r.t. $L_p(\R_+,U)$ for some $p \geq 1$. 
\end{proposition}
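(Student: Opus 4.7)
The plan is to close a three-way cycle:
\begin{equation*}
\text{0-UGASs} \;\Longrightarrow\; \text{ISS} \;\Longrightarrow\; \text{ISS w.r.t.\ } L_p(\R_+,U) \;\Longrightarrow\; \text{0-UGASs}.
\end{equation*}
The first implication is already available: it is exactly \cite[Proposition 3]{DaM13}, which the excerpt cites, applied to system \eqref{LinSys} with $f(x,u)=Bu$ and $B\in L(U,X)$. So the real content is the middle implication, and the last one will be free.

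Next I would upgrade 0-UGASs of the unforced system $\dot x=Ax$ to \emph{uniform exponential stability} of the semigroup $\T$: there exist $M\geq 1$ and $\omega>0$ with $\|T(t)\|\leq M e^{-\omega t}$ for all $t\geq 0$. This follows from a standard semigroup argument: 0-UGASs forces $\|T(t_0)\|<1$ for some $t_0>0$, and the semigroup property combined with the uniform boundedness of $\|T(\cdot)\|$ on $[0,t_0]$ (a consequence of strong continuity and Banach--Steinhaus) then yields the exponential bound. Having this, I apply the variation of constants formula
\begin{equation*}
x(t)=T(t)\phi_0+\int_0^t T(t-s)Bu(s)\,ds
\end{equation*}
to obtain, for any $p\geq 1$ and its H\"older conjugate $q\in(1,\infty]$,
\begin{equation*}
\|x(t)\|_X\leq M e^{-\omega t}\|\phi_0\|_X + M\|B\|\,\Big(\textstyle\int_0^t e^{-q\omega(t-s)}ds\Big)^{1/q}\|u\|_{L_p(\R_+,U)},
\end{equation*}
(with the standard modification $q=\infty$ when $p=1$). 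The integral factor is bounded by a constant $C_p<\infty$ independent of $t$, giving an ISS estimate with respect to the $L_p$-norm of the input, with linear gain $M\|B\|C_p$ and a $\KL$-function $\beta(r,t):=Me^{-\omega t}r$.

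For the final implication, ISS w.r.t.\ $L_p(\R_+,U)$ applied to the input $u\equiv 0$ gives precisely the 0-UGASs estimate \eqref{UniStabAbschaetzung}, closing the loop. I do not foresee a real obstacle; the only point requiring a little care is the transition from the $\KL$-bound on trajectories of the unforced system to the uniform exponential bound on $\|T(t)\|$, which is why I would isolate it as the first step. As a byproduct the argument shows that, for linear systems with a bounded input operator, ISS w.r.t.\ $L_p(\R_+,U)$ for \emph{one} $p\geq 1$ is in fact equivalent to ISS w.r.t.\ $L_p(\R_+,U)$ for \emph{every} $p\geq 1$.
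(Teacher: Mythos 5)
Your proposal is correct and follows essentially the same route as the paper: the equivalence of 0-UGASs and ISS is delegated to \cite[Proposition 3]{DaM13}, the implication from $L_p$-ISS back to 0-UGASs is read off by setting $u\equiv 0$, and the substantive direction is obtained from uniform exponential stability of the semigroup plus the variation-of-constants formula and H\"older's inequality. The only (cosmetic) difference is that the paper splits $e^{-\lambda(t-r)}$ into two halves before applying H\"older, whereas you place the whole exponential in the $L_q$ slot, which is a slightly cleaner way to get the same uniform-in-$t$ constant.
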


\begin{proof}
The fact that \eqref{LinSys} is 0-UGASs $\Iff$ \eqref{LinSys} is ISS has been proved in \cite[Proposition 3]{DaM13}.
Next, by definition, it is obvious that 
if \eqref{LinSys} is ISS w.r.t. $L_p(\R_+,U)$ for a real number $p \geq 1$, 
then it is 0-UGASs. 
To prove the converse, let \eqref{LinSys} be 0-UGASs. Then $T$ is an exponentially stable semigroup, see e.g. \cite[Lemma 1]{DaM13}. Thus
$\exists M,\lambda>0: \|T(t)\| \leq Me^{-\lambda t}$.

We estimate the solution $x(t)=\phi(t,\phi_0,u)$ of \eqref{LinSys}:
\begin{align*}
\|x(t)\|_X  = & \|T(t)\phi_0 + \int_0^t{T(t-r)B u(r)dr}\|_X \\
						\leq & \|T(t)\|\|\phi_0\|_X + \int_0^t{\|T(t-r)\|\|B\| \|u(r)\|_U dr}, \\
						\leq & M e^{-\lambda t} \|\phi_0\|_X + M \|B\| \int_0^t{e^{-\lambda (t-r)} \|u(r)\|_U dr}.
\end{align*}
Now, estimating $e^{-\lambda (t-r)}  \leq 1$, $r \leq t$ we obtain that \eqref{LinSys} is ISS w.r.t. $L_1(\R_+,U)$. 

To prove the claim for $p>1$, pick any $q \geq 1$ so that $\frac{1}{p}+ \frac{1}{q} = 1$. 
We continue the above estimates, using the H\"older's inequality
\begin{align*}
\|x(t)\|_X  \leq & M e^{-\lambda t} \|\phi_0\|_X  + M \|B\| \int_0^t{e^{-\frac{\lambda}{2} (t-r)} e^{-\frac{\lambda}{2} (t-r)} \|u(r)\|_U dr}  \\
\leq & M e^{-\lambda t} \|\phi_0\|_X + M \|B\|  \Big(\int_0^t{e^{- \frac{q \lambda}{2} (t-r)}dr} \Big)^{\frac{1}{q}} \cdot \Big(\int_0^t{e^{-\frac{p \lambda}{2} (t-r)}\|u(r)\|^p_U dr} \Big)^{\frac{1}{p}} \\
 \leq & M e^{-\lambda t} \|\phi_0\|_X + M \|B\|  \Big( \frac{2}{q\lambda} \Big)^{\frac{1}{q}}  \Big(\int_0^t{\|u(r)\|^p_U dr} \Big)^{\frac{1}{p}} \\
 = & M e^{-\lambda t} \|\phi_0\|_X + w \|u\|_{L_p(\R_+,U)},
\end{align*}
where $w:=M \|B\|  \Big( \frac{2}{q\lambda} \Big)^{\frac{1}{q}}$.
This means that \eqref{LinSys} is ISS w.r.t. $L_p(\R_+,U)$ also for $p>1$. 
\end{proof}

The above result resembles the corresponding result for finite-dimensional systems. But in contrast to finite-dimensional case, 0-GAS systems \eqref{LinSys} with inputs of arbitrarily small magnitude may produce unbounded trajectories, even for bounded operators $B$
\cite[p. 8]{DaM13} and \cite[p. 247]{MaP11}.


The following statement is a consequence of 
Proposition~\ref{ISS_Lp_spaces} 
(by taking $\theta:= {\rm id}$ and $\mu := c\cdot {\rm id}$ for large enough $c>0$ in Definition \ref{Def:iISS}):
\begin{corollary}
System \eqref{LinSys} is ISS iff it is iISS.
\end{corollary}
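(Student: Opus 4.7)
The plan is to derive the corollary immediately from Proposition~\ref{ISS_Lp_spaces}, exploiting the fact that for the linear system \eqref{LinSys} the ISS gain can be taken to be linear. Both implications should be short.

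For the direction iISS $\Rightarrow$ ISS, I would simply set $u\equiv 0$ in the iISS estimate \eqref{iISS_Estimate}; this yields $\|\phi(t,\phi_0,0)\|_X\leq\beta(\|\phi_0\|_X,t)$, so \eqref{LinSys} is 0-UGASs. Proposition~\ref{ISS_Lp_spaces} then delivers ISS.

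For the direction ISS $\Rightarrow$ iISS, I would invoke Proposition~\ref{ISS_Lp_spaces} once more: ISS is equivalent to ISS w.r.t.\ $L_1(\R_+,U)$, and the explicit estimate produced in the proof of that proposition with $p=1$ reads
\begin{equation*}
\|\phi(t,\phi_0,u)\|_X\leq M e^{-\lambda t}\|\phi_0\|_X + M\|B\|\int_0^t\|u(r)\|_U\,dr.
\end{equation*}
Setting $\beta(r,t):=Me^{-\lambda t}r\in\KL$, $\theta:=\mathrm{id}\in\Kinf$ and $\mu(r):=M\|B\|\,r\in\K$, this is precisely the iISS estimate \eqref{iISS_Estimate} from Definition~\ref{Def:iISS}.

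There is no real obstacle here; the corollary is a bookkeeping consequence of Proposition~\ref{ISS_Lp_spaces}, and the only thing worth flagging is that linearity of the gain in $u$ is what makes an ISS-in-$L_1$ estimate automatically of iISS form with $\theta$ of class $\Kinf$ and $\mu$ of class $\K$; without the linear structure of \eqref{LinSys} one would not be able to avoid a more delicate argument.
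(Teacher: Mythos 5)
Your proposal is correct and follows essentially the same route as the paper, which derives the corollary from Proposition~\ref{ISS_Lp_spaces} by taking $\theta=\mathrm{id}$ and $\mu=c\cdot\mathrm{id}$ for a sufficiently large constant $c>0$ (your $c=M\|B\|$, adjusted upward if $B=0$ so that $\mu\in\K$). The reverse direction via $u\equiv 0$ and the equivalence of 0-UGASs with ISS is exactly the intended argument.
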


For finite-dimensional systems, in the presence of nonlinearities which are locally Lipschitz w.r.t. state, 
0-GAS implies local ISS \cite[Lemma I.1]{SoW96}, 
i.e., the ISS property for initial states and inputs with a sufficiently small norm. 
In contrast to this finite-dimensional fact, we next show 
an infinite-dimensional linear system illustrating that 
\textit{for unbounded operator $B$, 0-UGASs implies neither ISS nor iISS, even if 
the initial state and the input is restricted to 
sufficiently small neighborhoods of the origin. 
}

\begin{examp}
\label{examp:Linear_Unbounded_InpOpe_nonISS}
Consider the following ODE ensemble defined on 
the interval $(0,\pi/2)$ of the spatial variable $l$: 
\begin{eqnarray}
\dot{x}(l,t)=-x(l,t)+(\tan l)^{\frac{1}{8}} u(l,t), \quad l \in (0,\pi/2) . 
\label{eq:Example_Unbounded_Input_operator}
\end{eqnarray}
Let $X=C(0,\pi/2)$ be the space of bounded continuous 
functions on $(0,\pi/2)$. The functions $x(l,t)$ and $u(l,t)$ are scalar-valued. 
The input operator $B:D(B) \to X$ for \eqref{eq:Example_Unbounded_Input_operator}
is defined by $(Bv)(l)=(\tan l)^{\frac{1}{8}} v(l)$ which 
is unbounded with a domain of definition
\[
D(B)=\{ v \in C(0,\pi/2): \sup_{l \in (0,\pi/2)}|(\tan l)^{\frac{1}{8}} v(l)| < \infty \}.
\]
Since $x(\cdot,t)=e^{-t}x(\cdot,0)$ holds for 
$u(\cdot,t)=0$, $\forall t\geq 0$,
system \eqref{eq:Example_Unbounded_Input_operator} is 
0-UGASs. But it is neither ISS nor iISS for $U=D(B)$. To verify this fact, consider an input $u(l,t)=\hat{u}_c(l)$ given by 
\[
\hat{u}_c(l)=\left\{ \begin{array}{ll}
b &,  \ 0 < l < \arctan{(c^8)}  \\ 
bc(\tan l)^{-\tfrac{1}{8}}   &, \ \arctan{(c^8)} \leq l < \frac{\pi}{2}
\end{array}\right.
\]
for real $b, c>0$ (we do not reflect the dependence of $\hat{u}_c$ on $b$ in the notation for the sake of simplicity). 
It is easy to see that $\hat{u}_c\in D(B)$ and $\|\hat{u}_c\|_U=b$ from 
$\|B\hat{u}_c\|_X=\sup_{l \in (0,\pi/2)}|\hat{u}_c(l)(\tan l)^{\frac{1}{8}}|=bc$ 
and the definition of $\hat{u}_c$. 
The solution of \eqref{eq:Example_Unbounded_Input_operator} for 
$\phi_0=0$ is computed as 
$\phi(t,0,u)(l)=\int_0^t e^{-(t-r)}\hat{u}_c(l)(\tan l)^{\frac{1}{8}}  dr=(1-e^{-t})\hat{u}_c(l)(\tan l)^{\frac{1}{8}}$. Thus, by definition, the solution satisfies 
\[
\sup_{l \in (0,\pi/2)}\phi(t,0,u)=bc(1-e^{-t}). 
\]
Now, assume that system \eqref{eq:Example_Unbounded_Input_operator} is iISS.  
From Definition \ref{Def:iISS} it follows that there exist 
$\theta$, $\mu \in \Kinf$ satisfying 
$\|\phi(t,0,u)\|_X \leq \theta (t \mu(b))$ for $t \geq 0$.
Clearly, for any given $\alpha$, $\mu \in \Kinf$ and any $t>0$, 
one can find $c=c(b,t)>0$ so that $bc(1-e^{-t}) > \theta (t \mu(b))$. 
Since $\|\hat{u}_c\|_U=b$ is satisfied for any $c>0$, 
system \eqref{eq:Example_Unbounded_Input_operator} is not iISS. 

Next, suppose that system \eqref{eq:Example_Unbounded_Input_operator} is ISS.  
Definition \ref{Def:ISS} implies the existence of 
$\gamma \in \Kinf$ satisfying 
$\|\phi(t,0,u)\|_X \leq \gamma(b)$ for $t \geq 0$ 
when $u=\hat{u}_c$ is applied to \eqref{eq:Example_Unbounded_Input_operator}. 
For any given $\gamma \in \Kinf$ and any $t>0$, 
there exists $c>0$ such that $bc(1-e^{-t}) > \gamma(b)$. 
Hence, system \eqref{eq:Example_Unbounded_Input_operator} is not ISS either. 
Since we can take $\phi_0=0$ and $b>0$ is arbitrary, the system 
\eqref{eq:Example_Unbounded_Input_operator} is 
neither iISS nor ISS even if the initial state and the input 
are restricted to arbitrarily small neighborhoods of the origin. 
Next we show that \eqref{eq:Example_Unbounded_Input_operator} is ISS if we choose $X=L_2(0,\pi/2)$, $U=L_4(0,\pi/2)$. 

%

Choose 
\[
V(x)=\int_0^{\pi/2}{x^2(l)dl} = \|x\|^2_{L_2(0,\pi/2)}
\]
For the solutions $x(\cdot,t)=\phi(t,\phi_0,u)$ of 
\eqref{eq:Example_Unbounded_Input_operator} we obtain 
\begin{align*}
\frac{d}{dt}V(x) &= 2 \int_0^{\pi/2}{x(l,t) \Big(-x(l,t) + u(l,t)(\tan l)^{\frac{1}{8}} \Big)dl} \\
&\leq -2 V(x) + w V(x) + \frac{1}{w}\int_0^{\pi/2}{u(l,t)^2(\tan l)^{\frac{1}{4}} dl}\\
&\leq (-2 +w) V(x) + \frac{K}{w} \|u(\cdot,t)\|^2_{L_4(0,d)},
\end{align*}
for any $w>0$ (between lines 1 and 2 Young's inequality has been used). Here, it is important to notice that 
$K:=\int_0^{\pi/2}{(\tan l)^{\frac{1}{2}}dl}< \infty$. Hence, 
taking $w<2$, Proposition \ref{PropSufiISS} proves that 
system \eqref{eq:Example_Unbounded_Input_operator} is ISS 
for $X=L_2(0,\pi/2)$ and $U=L_4(0,\pi/2)$. 
\qed
\end{examp}

\subsection{iISS of generalized bilinear systems}\label{sec:Binlin}

While for linear infinite-dimensional systems with bounded input operators the properties of ISS and iISS coincide, 
the difference between these two properties arises for 
bilinear systems which is one of 
the simplest classes of nonlinear systems. 
For finite-dimensional bilinear systems, 
Sontag \cite{Son98} demonstrated that 0-GAS systems  are seldom ISS (for example, $\dot{x}=-x+xu$, $x(t)\in \R$ is not ISS), but are always iISS. 
To generalize this result to infinite-dimensional systems, 
consider 
\begin{equation}
\label{BiLinSys}
\begin{array}{l}
{\dot{x}(t)=Ax(t)+ Bu(t) + C(x(t),u(t)),} \\
x(0)=\phi_0,
\end{array}
\end{equation}
where $B \in L(U,X)$, and $C: X \times U \to X$ is such that there exist $K>0$ and $\xi \in \K$ so that 
for all $x \in X$ and all $u \in U$ we have that
\begin{align}
\|C(x,u)\|_X \leq K \|x\|_X \xi(\|u\|_U).
\label{eq:BilinOperator}
\end{align}

\begin{remark}
This class includes systems with $C$ linear in both variables and bounded in the sense that $\|C\|:=\sup_{\|x\|_X =1,\|u\|_U=1}\|C(x,u)\|_X < \infty$  (then $K=\|C\|$ and $\xi(r)=r$ for all $r \in \R_+$). 
\end{remark}


%

Next we prove the equivalence between 0-UGASs and iISS for the general system \eqref{BiLinSys} in Banach spaces. 
For infinite-dimensions, we employ the notion of 0-UGASs instead of 0-GAS. 
To establish iISS from 0-GAS for finite-dimensional systems, 
Sontag \cite{Son98} constructed Lyapunov functions for systems 
with Hurwitz $A$ by means of the Lyapunov equation. 
To the best of authors' knowledge, there is no generalization of the method for construction of Lyapunov functions by means of the Lyapunov equation for linear systems on Banach spaces, although for systems on Hilbert spaces such a construction exists.
Thus, to allow for Banach spaces, we employ another method to prove that iISS is equivalent to 0-UGASs. 

\begin{Satz}
System \eqref{BiLinSys} is iISS $\Iff$ \eqref{BiLinSys} is 0-UGASs.
\end{Satz}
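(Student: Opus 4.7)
The plan is to dispense with the forward direction iISS $\Rightarrow$ 0-UGASs by setting $u\equiv 0$ in \eqref{iISS_Estimate}: since $\mu(0)=0$ the input term vanishes and \eqref{UniStabAbschaetzung} follows. For the converse I would avoid any Lyapunov construction (reserved for the Hilbert-space part of the paper) and instead work with the mild-solution formula. A first observation is that $\xi\in\K$ forces $C(x,0)=0$, so 0-UGASs of \eqref{BiLinSys} reduces to 0-UGASs of the semigroup generated by $A$; by \cite[Lemma~1]{DaM13} (the same tool already used in Proposition~\ref{ISS_Lp_spaces}) this is equivalent to uniform exponential stability $\|T(t)\|\leq Me^{-\lambda t}$ for some $M,\lambda>0$.

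Next, taking norms in the mild-solution representation and using \eqref{eq:BilinOperator} gives a scalar integral inequality for $\|x(t)\|_X$ whose linear-in-$x$ kernel is $MK\xi(\|u(s)\|_U)$. Multiplying by $e^{\lambda t}$ and applying the standard Gronwall inequality to $e^{\lambda t}\|x(t)\|_X$ produces
\begin{align*}
\|x(t)\|_X \leq \Bigl[Me^{-\lambda t}\|\phi_0\|_X + M\|B\|\!\int_0^t\! e^{-\lambda(t-s)}\|u(s)\|_U\,ds\Bigr]\exp\!\Bigl(MK\!\int_0^t\!\xi(\|u(s)\|_U)\,ds\Bigr).
\end{align*}
Choosing $\mu(r):=r+\xi(r)\in\Kinf$ and $J(t):=\int_0^t\mu(\|u(s)\|_U)\,ds$, both input integrals are dominated by $J(t)$, so
\begin{align*}
\|x(t)\|_X \leq Me^{-\lambda t}\|\phi_0\|_X\,e^{MKJ(t)} + M\|B\|J(t)\,e^{MKJ(t)}.
\end{align*}
The second summand is already a $\Kinf$-function of $J(t)$, namely $\theta_1(J(t))$ with $\theta_1(s)=M\|B\|se^{MKs}$.

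The hard part will be separating the remaining mixed term $Me^{-\lambda t}\|\phi_0\|_X\,e^{MKJ(t)}$ into a $\KL$-function of $(\|\phi_0\|_X,t)$ plus a $\Kinf$-function of $J(t)$, as required by \eqref{iISS_Estimate}. I plan to do this by a pointwise case split on the ratio of $J(t)$ to $\|\phi_0\|_X$. When $J(t)\leq\|\phi_0\|_X$ the mixed term is dominated by $M\|\phi_0\|_X\,e^{MK\|\phi_0\|_X}\,e^{-\lambda t}$, which is of class $\KL$ in $(\|\phi_0\|_X,t)$; when $J(t)>\|\phi_0\|_X$, using $e^{-\lambda t}\leq 1$, it is dominated by $MJ(t)e^{MKJ(t)}\in\Kinf$. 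Adding the two bounds to the already-isolated $\Kinf$ summand delivers \eqref{iISS_Estimate} with $\beta\in\KL$ and $\theta\in\Kinf$ in closed form. Global existence of the trajectory is then inherited from the same a priori Gronwall bound via the bootstrap already carried out in the proof of Proposition~\ref{PropSufiISS}. The only non-routine step in the whole argument is the case split that decouples the mixed exponential factor; this is what makes the semigroup-plus-Gronwall route succeed on general Banach spaces without recourse to a Lyapunov equation.
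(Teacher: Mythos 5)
Your proposal is correct, and its skeleton is exactly the paper's: reduce 0-UGASs to exponential stability of the semigroup via \cite[Lemma 1]{DaM13}, take norms in the mild-solution formula, multiply by $e^{\lambda t}$ and apply Gronwall's inequality to obtain the bound $\|x(t)\|_X \leq \bigl(Me^{-\lambda t}\|\phi_0\|_X + M\|B\|\int_0^t\|u(s)\|_U\,ds\bigr)e^{MK\int_0^t\xi(\|u(s)\|_U)ds}$. The only place you diverge is the final separation of the mixed term $Me^{-\lambda t}\|\phi_0\|_X\,e^{MKJ(t)}$ into a $\KL$ part plus a $\Kinf(J(t))$ part. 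The paper does this by composing with $\alpha(r)=\ln(1+r)$ and exploiting $\ln(1+ae^b)\leq\ln(1+a)+b$ and $\ln(1+a+b)\leq\ln(1+a)+\ln(1+b)$, then undoing $\alpha$ with the weak triangle inequality $\alpha^{-1}(a+b)\leq\alpha^{-1}(2a)+\alpha^{-1}(2b)$; you instead split pointwise on whether $J(t)\leq\|\phi_0\|_X$ or not, bounding the term by $M\|\phi_0\|_Xe^{MK\|\phi_0\|_X}e^{-\lambda t}$ in the first case and by $MJ(t)e^{MKJ(t)}$ in the second, and summing the two majorants. Both decompositions are valid and yield explicit $\beta\in\KL$, $\theta\in\Kinf$, $\mu\in\K$; your case split is slightly more elementary and avoids the inverse-composition bookkeeping, while the paper's logarithmic route is the trajectory-level shadow of the Lyapunov construction $W(x)=\ln(1+\langle Px,x\rangle)$ used later in the Hilbert-space setting, which is presumably why the authors chose it. Your closing remark on global existence via the a priori bound is sound and in fact slightly more explicit than the paper's own treatment.
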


\begin{proof}
Clearly, iISS implies 0-UGASs for \eqref{BiLinSys}. To prove the converse, assume that 
\eqref{BiLinSys} be 0-UGASs, that is 
let $T$ be an exponentially stable semigroup generated by $A$.
Integrating \eqref{BiLinSys}, we obtain
\begin{equation*}
\label{iISS_Syst_IntEq}
x(t)=T(t)x(0) + \int_0^t{T(t-r) \big( Bu(r)+ C(x(r),u(r))\big)dr} .
\end{equation*}
From $B \in L(U,X)$, inequality \eqref{eq:BilinOperator} 
and since $T$ is an exponentially stable semigroup it follows for some $K,M,\lambda>0$, that
\begin{align*}
\label{iISS_Syst_IntEq_Estim1}
\|x(t)\|_X & \leq  \|T(t)\|\|x(0)\|_X + \int_0^t \|T(t-r)\| \big( \|B\|\|u(r)\|_U+ \|C(x(r),u(r))\|_X\big)dr \\
& \leq  Me^{-\lambda t} \|x(0)\|_X + \int_0^t Me^{-\lambda (t-r)} \big( \|B\|\|u(r)\|_U + K \|x(r)\|_X \xi(\|u(r)\|_U)\big)dr . 
\end{align*}
We multiply both sides of the inequality by $e^{\lambda t}$ and define $z(t) = x(t) e^{\lambda t}$. From $\lambda>0$ we obtain
\begin{align*}
\|z(t)\|_X \leq & M \Big( \|z(0)\|_X + \|B\| \int_0^t e^{\lambda r} \|u(r)\|_U dr\Big)  + \int_0^t{MK \|z(r)\|_X \xi(\|u(r)\|_U)dr}.
\end{align*}
Since $q: t \mapsto M\big( \|z(0)\|_X + \|B\| \int_0^t e^{\lambda r} \|u(r)\|_U dr \big)$ is a non-decreasing function, Gronwall's inequality (see e.g. \cite[Lemma 2.7, p.42]{Tes12}) yields 
\begin{align*}
\|z(t)\|_X \leq& M\Big( \|z(0)\|_X + \|B\| \int_0^t e^{\lambda r} \|u(r)\|_U dr \Big)  e^{\int_0^t{MK \xi(\|u(r)\|_U)dr}}.
\end{align*}
Coming back to original variables and using $\lambda>0$, we have 
\begin{align*}
\|x(t)\|_X 
\leq & M\Big( e^{-\lambda t}\|x(0)\|_X + \|B\| \int_0^t e^{-\lambda (t-r)} \|u(r)\|_U dr\Big)  e^{\int_0^t{MK \xi(\|u(r)\|_U)dr}} \\
\leq & M\Big( e^{-\lambda t}\|x(0)\|_X + \|B\| \int_0^t \|u(r)\|_U dr\Big)  e^{\int_0^t{MK \xi(\|u(r)\|_U)dr}}.
\end{align*}
Applying to the both sides the function $\alpha \in \Kinf$ defined for all $r \geq 0$ as $\alpha(r)=\ln(1+r)$ results in 
\begin{align*}
\alpha(\|x(t)\|_X ) \leq& \ln\Big(1+ M\big( e^{-\lambda t}\|x(0)\|_X +\|B\| \int_0^t \|u(r)\|_U dr\big) e^{\int_0^t{MK \xi(\|u(r)\|_U)dr}}\Big).
\end{align*}
Now since for all $a,b \in \R_+$ it holds that
\[
\ln(1+ae^b) \leq \ln((1+a)e^b)=\ln(1+a) + b , 
\]
we obtain
\begin{align*}
\alpha(\|x(t)\|_X ) \leq& \ln\Big( 1+ M\big( e^{-\lambda t}\|x(0)\|_X  +\|B\| \int_0^t \|u(r)\|_U dr\big) \Big) \\
 & \quad\quad + \int_0^t{MK \xi(\|u(r)\|_U)dr}.
\end{align*}
Moreover, for all $a,b \in \R_+$ it holds that
\[
\ln(1+a + b) \leq \ln((1+a)(1+b))=\ln(1+a) + \ln(1+b),
\]
which implies
\begin{align*}
\alpha(\|x(t)\|_X ) \leq& \ln\big(1+ M e^{-\lambda t}\|x(0)\|_X\big)\\
& + \ln\Big(1+ M\|B\| \int_0^t \|u(r)\|_U dr\Big)  + \int_0^t{MK \xi(\|u(r)\|_U)dr}.
\end{align*}
Since $\beta:(s,t) \mapsto \ln(1+ M e^{-\lambda t}s)$ is a $\KL$-function, with the help of 
$\alpha^{-1}(a+b)\leq \alpha^{-1}(2a)+\alpha^{-1}(2b)$ holding for 
any $a, b\in\R_+$, 
the above estimate shows us that \eqref{BiLinSys} is iISS 
as defined in Definition \ref{Def:iISS}.  
\end{proof}

\subsection{Lyapunov functions for generalized bilinear systems}\label{sec:Lyap_Binlin}

This section develops a method to construct an 
iISS Lyapunov function for the infinite-dimensional system 
\eqref{BiLinSys} 
analogous to the finite-dimensional case \cite{Son98}. 
For this purpose, in this section, let $X$ be a Hilbert space with a 
scalar product $\lel \cdot, \cdot \rir$. Note that 
if \eqref{BiLinSys} is 0-UGASs, the operator $A$ generates an exponentially stable semigroup \cite[Lemma 1]{DaM13}. 
Since $X$ is a Hilbert space, the exponential stability of this semigroup is equivalent to existence of a positive self-adjoint operator $P\in L(X)$ satisfying the Lyapunov equation
\begin{equation}
\label{OperatorBedingung}
\lel Ax,Px \rir + \lel Px,Ax \rir=-\|x\|^2_X \quad \forall x \in D(A),
\end{equation}
see \cite[Theorem 5.1.3, p. 217]{CuZ95}.
Recall that a self-adjoint operator $P \in L(X)$ is said to be positive if $\lel P x,x \rir > 0$ holds for all $x \in X\setminus\{0\}$. A positive operator $P \in L(X)$ is called coercive if there exists $k>0$ such that
\[
\lel Px,x \rir \geq k \|x\|^2_X \quad \forall x \in D(P).
\]

\begin{Satz}
\label{Thm:LF_Constructions_for_bilinear_systems}
Consider a system \eqref{BiLinSys} over a Hilbert space $X$. 
Let Assumption~\ref{Assumption1} hold and let 
there exists a coercive positive self-adjoint operator $P \in L(X)$ satisfying \eqref{OperatorBedingung}. 
If $A$ generates an analytic semigroup, then the system \eqref{BiLinSys} is iISS and its iISS Lyapunov function can be constructed as
\begin{align}
W(x)=\ln\Big(1+ \lel Px,x \rir \Big).
\label{eq:iISS_LF}
\end{align}
If $A$ does not necessarily generate an analytic semigroup, but all the trajectories, emanating from $D(A)$ under arbitrary input $u \in U_c$ remain in $D(A)$, then \eqref{BiLinSys} is still iISS and $W$ is its iISS Lyapunov function on $D(A)$.
\end{Satz}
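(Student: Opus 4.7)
The plan is to verify that $W(x)=\ln(1+\langle Px,x\rangle)$ satisfies Definition~\ref{def:iISSV} on $D(A)$ and then invoke Proposition~\ref{PropSufiISS}, afterwards lifting iISS from $D(A)$ to $X$ in the analytic case via Proposition~\ref{DensityArg_StateAppr}. The sandwich bound \eqref{LyapFunk_1Eig} is immediate: if $k>0$ is the coercivity constant of $P$, then $\ln(1+k\|x\|_X^2)\le W(x)\le \ln(1+\|P\|\|x\|_X^2)$, and both envelopes lie in $\mathcal{K}_\infty$.

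\medskip

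For the Lie derivative I would work at $x\in D(A)$, where the mild solution of \eqref{BiLinSys} is classical on a short interval. Self-adjointness of $P$ together with the Lyapunov equation \eqref{OperatorBedingung} give
\[
\tfrac{d}{dt}\langle Px,x\rangle\Big|_{t=0}=-\|x\|_X^2+2\langle Px,Bu(0)\rangle+2\langle Px,C(x,u(0))\rangle.
\]
Cauchy--Schwarz and Young's inequality yield $|2\langle Px,Bu(0)\rangle|\le \tfrac{1}{2}\|x\|_X^2+2\|P\|^2\|B\|^2\|u(0)\|_U^2$, and the bilinearity bound \eqref{eq:BilinOperator} gives $|2\langle Px,C(x,u(0))\rangle|\le 2K\|P\|\|x\|_X^2\,\xi(\|u(0)\|_U)$. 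Dividing by $1+\langle Px,x\rangle$ is where the logarithm earns its keep, in the spirit of Sontag's construction in \cite{Son98}: coercivity produces the uniform bound $\|x\|_X^2/(1+\langle Px,x\rangle)\le 1/k$, so the state-coupled input contribution collapses to $(2K\|P\|/k)\xi(\|u(0)\|_U)$; the purely input term $2\|P\|^2\|B\|^2\|u(0)\|_U^2$ passes through the division harmlessly; and what survives of the negative contribution is $-\alpha(\|x\|_X):=-\tfrac{1}{2}\|x\|_X^2/(1+\|P\|\|x\|_X^2)$, with $\alpha\in\mathcal{P}$. The dissipation inequality \eqref{DissipationIneq} therefore holds with $\sigma(r):=2\|P\|^2\|B\|^2 r^2+(2K\|P\|/k)\xi(r)\in\mathcal{K}$. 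Observe that $\alpha$ is bounded while $\sigma$ is not, so \eqref{eq:ISSalphsig} fails in general, consistent with the system being iISS but not necessarily ISS.

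\medskip

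Proposition~\ref{PropSufiISS} applied to the restricted system $(D(A),U_c,\phi)$ then yields \eqref{iISS_Estimate} for initial data in $D(A)$, which is exactly the second conclusion of the theorem. In the first case, analyticity of the semigroup guarantees that $D(A)$ is dense in $X$ and that mild solutions depend continuously on initial data, so Proposition~\ref{DensityArg_StateAppr} upgrades iISS from $D(A)$ to $X$ with the same $\beta$, $\theta$, $\mu$. The main obstacle is rigorously justifying the chain rule implicit in the derivative computation: the definition \eqref{LyapAbleitung} of $\dot{W}_u$ is an upper Dini derivative, while the classical formula above requires $t\mapsto x(t)$ to be strongly differentiable at $0$, which is only automatic for $x\in D(A)$ and regular inputs. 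Restricting to that setting avoids the issue, after which the density results (Proposition~\ref{DensityArg_StateAppr}, together with Proposition~\ref{DensityArg} if the inputs must also be approximated) complete the transfer to $(X,U_c)$.
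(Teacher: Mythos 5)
Your computation is correct and is essentially the paper's own: the sandwich bound from coercivity and boundedness of $P$, the Lyapunov equation \eqref{OperatorBedingung} killing the $A$-terms on $D(A)$, Cauchy--Schwarz plus Young for the $Bu$ contribution, the bilinear bound \eqref{eq:BilinOperator} for the $C$-term, and the division by $1+\langle Px,x\rangle$ turning the state-coupled input term into a bounded multiple of $\xi(\|u\|_U)$ via $\|x\|_X^2/(1+\langle Px,x\rangle)\le 1/k$. (Your fixed Young split $\tfrac12\|x\|_X^2+2\|P\|^2\|B\|^2\|u\|_U^2$ is just the paper's $\eps$-parametrized version with a particular admissible $\eps$.) The one place you diverge is the analytic case: you verify the dissipation inequality only on $D(A)$ and then lift the resulting iISS \emph{estimate} to $X$ by density and continuous dependence (Proposition~\ref{DensityArg_StateAppr}), which the paper reserves for the non-analytic case. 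The paper instead exploits the smoothing property of analytic semigroups: for $x\notin D(A)$ the trajectory enters $D(A)$ instantly, $t\mapsto W(x(t))$ is $C^1$ for $t>0$, and the mean value theorem plus a limit $t\to 0^+$ shows that the dissipation inequality \eqref{LyapEstimate} holds at every $x\in X$, i.e.\ $W$ is an iISS Lyapunov function on all of $X$, which is what the first assertion of the theorem literally claims. Your route proves iISS of the system on $X$ (the substantive conclusion) but leaves $W$ certified as a Lyapunov function only on $D(A)$ even in the analytic case; to recover the full statement you would add the mean-value-theorem step, for which analyticity --- not density --- is the operative hypothesis. A minor side remark: density of $D(A)$ in $X$ holds for any $C_0$-semigroup generator, so attributing it to analyticity is unnecessary.
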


\begin{proof}
Let assumptions of the theorem hold. Consider a function $V: x \mapsto \lel Px,x \rir $. 
Since $P$ is bounded and coercive, for some $k>0$ it holds 
\[
k \|x\|^2_X \leq V(x) \leq \|P\| \|x\|^2_X, \quad \forall x \in X,
\]
and property \eqref{LyapFunk_1Eig} is verified.
Let us compute the Lie derivative of $V$ with respect to the system \eqref{BiLinSys}. 
For $x \in D(A)$ we have
\begin{align*}
\dot{V}(x) =& \lel P\dot{x},x \rir +  \lel Px,\dot{x} \rir \\
 =&\lel P(Ax +Bu + C(x,u)),x \rir  + \lel Px,Ax +Bu + C(x,u) \rir  \\
=& \lel P(Ax),x \rir +  \lel Px,Ax \rir + \lel P(Bu + C(x,u)),x \rir +  \lel Px,Bu + C(x,u) \rir.
\end{align*}
From $\lel P(Ax),x \rir = \lel Ax,Px \rir$, 
\eqref{eq:BilinOperator} and \eqref{OperatorBedingung} 
with the help of Cauchy-Schwarz inequality, we obtain 
\begin{align*}
\dot{V}(x) \leq& -\|x\|^2_X + \|P(Bu + C(x,u))\|_X  \|x\|_X + \|Px\|_X \|Bu + C(x,u)\|_X \\ 
\leq& -\|x\|^2_X + \|P\| \|Bu + C(x,u)\|_X  \|x\|_X  + \|P\| \|x\|_X \|Bu + C(x,u)\|_X \\ 
\leq& -\|x\|^2_X + 2 \|P\| \|x\|_X (\|B\| \|u\|_U + K \|x\|_X \xi(\|u\|_U)) \\
\leq& -\|x\|^2_X \!\!+\! 2 K \|P\| \|x\|^2_X \xi(\|u\|_U)  \!\!+\! 2 \|P\| \|B\| \|x\|_X \|u\|_U.
\end{align*}
Let $\eps>0$. 
Using Young's inequality 
\begin{align*}
2 \|x\|_X \|u\|_U \leq \eps \|x\|^2_X + \dfrac{1}{\eps} \|u\|^2_U , 
\end{align*}
we can continue the above estimates as 
\begin{align*}
\dot{V}(x) \leq 
& -\Big(1 - \eps \|P\|\|B\|\Big) \|x\|^2_X + 2 K \|P\| \|x\|^2_X \xi(\|u\|_U) + \dfrac{\|P\|\|B\|}{\eps} \|u\|^2_U . 
\end{align*}
Defining $W$ as in \eqref{eq:iISS_LF} yields 
\begin{align*}
\dot{W}(x) \leq& \frac{1}{1+V(x)}
\Bigl[-\left(1 - \eps \|P\|\|B\|\right) \|x\|^2_X  + 2 K \|P\| \|x\|^2_X \xi(\|u\|_U) 
+ \dfrac{\|P\|\|B\|}{\eps} \|u\|^2_U \Bigr] \\
\leq & -\left(1 - \eps \|P\|\|B\|\right)\frac{\|x\|^2_X}{1+V(x)} + \frac{2 K \|P\|\|x\|^2_X}{1+k\|x\|^2_X}\xi(\|u\|_U) 
+ \dfrac{\|P\|\|B\|}{\eps} \|u\|^2_U. 
\end{align*}
which finally leads to
\begin{align}
\dot{W}(x) \leq& 
-\left(1 - \eps \|P\|\|B\|\right)\frac{\|x\|_X^2}{1+\|P\|\|x\|_X^2} + \frac{2 K \|P\|}{k}\xi(\|u\|_U) 
+ \dfrac{\|P\|\|B\|}{\eps} \|u\|^2_U. 
\label{LyapEstimate}
\end{align}
These derivations hold for $x \in D(A) \subset X$. 
If $x \notin D(A)$, then for all admissible $u$ the solution $x(t) \in D(A)$ and $t \to W(x(t))$ is a continuously differentiable function for all $t >0$ (these properties follow from the properties of solutions $x(t)$, see Theorem 3.3.3 in \cite{Hen81}). 
Therefore, by the mean-value theorem, $\forall t>0$ $\exists t_* \in (0,t)$
\[
\frac{1}{t}(W(x(t))-W(x))=\dot{W}(x(t_*)),
\]
where $x=x(0)$. 
Taking the limit when $t \to +0$ we obtain that \eqref{LyapEstimate} holds for all $x \in X$.
Pick $\eps>0$ such that $\eps<{1}/(\|P\|\|B\|)$. 
According to Proposition \ref{PropSufiISS}, system \eqref{BiLinSys} 
is iISS and $W$ is an iISS Lyapunov function. 

Let now all the trajectories, emanating from $D(A)$ under arbitrary input $u \in U_c$ remain in $D(A)$. This means, that \eqref{BiLinSys} generates a control system on $D(A)$ with inputs $U_c$. 

According to above argument $W$ is an iISS Lyapunov function for this restricted system, which shows its iISS. 
Similarly to \cite[Proposition 4.3.7]{CaH98} one can prove, that Assumption~\ref{Assumption1} implies that \eqref{BiLinSys} depends continuously on initial data. Thus, due to Proposition~\ref{DensityArg_StateAppr} iISS of \eqref{BiLinSys} for state from $D(A)$ and inputs belonging to $U$ implies iISS of \eqref{BiLinSys} on the whole spaces $X,U$. 
\end{proof}

As we see the proof of the previous theorem consists basically of two parts: the verification that $W$ is an iISS Lyapunov function for $x \in D(A)$ and then use of a density argument for analytic semigroups.
We will see that this strategy will be useful also in the proof of Theorem~\ref{thm:Dissipative_Implicative}, see Appendix.

\begin{remark}
Note, that in the case when $A$ generates a nonanalytic semigroup we do not claim that $W$ is an iISS Lyapunov function on the whole space $X$, since we cannot compute the Lie derivative $\dot{W}_u$ for $x \notin D(A)$.
\end{remark}
%
%
%

\section{An example}
\label{sec:Example}

In this concluding section we illustrate our findings on an example of an iISS parabolic system. 
Let $c>0$ and $L>0$. 
Consider the following reaction-diffusion system
\begin{equation}
\label{GekoppelteNonLinSyst}
\left
\{
\begin{array}{l} 
{\dfrac{\partial x}{\partial t}(l,t) = c \dfrac{\partial^2 x}{\partial l^2}(l,t) + \dfrac{x(l,t)}{1+|l-1|x(l,t)^2}u(l,t),} \\[1.5ex]
{x(0,t) = x(L,t)=0;} 
\end{array}
\right.	
\end{equation}
on the region $(l,t) \in (0,L) \times (0,\infty)$ of 
the $\R$-valued functions $x(l,t)$ and $u(l,t)$. 

Let $X=L_2(0,L)$ and $U=C(0,L)$.
It is easy to see that the above system is a generalized bilinear system since its nonlinearity satisfies inequality \eqref{eq:BilinOperator}. Clearly, this system is 0-UGASs, therefore it is iISS for any $L>0$. Below we give an explicit construction of an iISS Lyapunov function for this system. Afterwards we will prove that this system is ISS for $L<1$.

Define 
\[
W(x)=\int_0^L{x^2(l)dl} = \|x\|^2_{L_2(0,L)} . 
\]
Since $1+|l-1|x(l,t)^2\geq 1$, we obtain 
\begin{align*}
\dot{W}(x) =& 2 \int_0^L{x(l)\Big( c \dfrac{\partial^2 x}{\partial l^2}(l,t) + \dfrac{x(l,t)}{1+|l-1|x(l,t)^2}u(l,t)\Big)}dl \\
\leq& -2c  \int_0^L \left( \dfrac{\partial x}{\partial l}(l,t) \right)^2 dl + 2 \int_0^L{x^2(l,t)|u(l,t)|dl}.
\end{align*}
Applying the Friedrich's inequality \cite[p. 67]{MPF91} to the first term, we proceed to:
\begin{align*}
\dot{W}(x) \leq  -2c \left( \frac{\pi}{L} \right)^2 W(x) + 2 W(x) \|u\|_{C(0,L)}
\end{align*}
Choosing  
\begin{align}
V(x)=\ln\left(1+W(x)\right)  
\label{eq:LF_1st_Sys}
\end{align}
yields 
\begin{align}
\dot{V}(x) \leq& 
-2c \!\left( \frac{\pi}{L} \right)^2\!\frac{ W(x)}{1\!+\!W(x)}
\!+2 \frac{ W(x)}{1\!+\!W(x)} \|u\|_{C(0,L)}    \nonumber\\
\leq &  -2c \!\left( \frac{\pi}{L} \right)^2\!\frac{\|x\|_{L_2(0,L)}^2}{1\!+\!\|x\|_{L_2(0,L)}^2}
+   2 \|u\|_{C(0,L)},
\label{Examp_Conclusing_iISS_estim} \\
=&
-\alpha(\|x\|_{L_2(0,L)}) + \sigma(\|u\|_{C(0,L)}) , 
\nonumber 
\end{align}
where 
\begin{align}
\alpha(s)=
2c \!\left( \frac{\pi}{L} \right)^2\!\frac{s^2}{1\!+\!s^2}
, \quad 
\sigma(s)=2s . 
\label{Examp_Conclusing_iISS_estimb}
\end{align}
Thus, Proposition \ref{PropSufiISS}
establishes iISS of \eqref{GekoppelteNonLinSyst} irrespective of a value of $L$.

\begin{remark}
Since $x(\cdot,t) \in L_2(0,L)$, the spatial derivative of $x$ above may not exist. However, the above derivations hold for smooth enough functions $x$, and the general result for all $x(\cdot,t) \in L_2(0,L)$ will follow due to Proposition~\ref{DensityArg}.
\end{remark}

Interestingly, when $L<1$, the system \eqref{GekoppelteNonLinSyst} is ISS for 
the input space $U=C(0,L)$ as well as $U=L_2(0,L)$. 
To verify this, we first note that 
\begin{align}
\sup_{s\in\R}
\left|\dfrac{s}{1+|l-1|s^2}\right|=
\dfrac{1}{2\sqrt{1-l}}  
\end{align}
holds for $l<1$. 
Assume $L<1$.  Using the same Lyapunov function $W$ we obtain 
\begin{align}
\dot{W}(x) \leq & 
2 \int_0^L x(l,t)c \frac{\partial^2 x}{\partial l^2}(l,t)dl 
+ 2 \int_0^L \dfrac{1}{2\sqrt{1-l}}|x(l,t)u(l,t)|dl
\nonumber \\
\leq& 
-2c \left( \frac{\pi}{L} \right)^2 \|x\|_{L_2(0,L)}^2
+\dfrac{1}{\sqrt{1-L}}\|x\|_{L_2(0,L)}\|u\|_{L_2(0,L)}
\nonumber \\
\leq& 
-\left(2c \left( \frac{\pi}{L} \right)^2-w\right) \|x\|_{L_2(0,L)}^2
+\dfrac{1}{4(1-L)w}\|u\|_{L_2(0,L)}^2
\label{Example_Estim}
\end{align}
for $0<w<2c({\pi}/{L})^2$.  Recall that 
$\|u\|_{L_2(0,L)}^2\leq L\|u\|_{C(0,L)}^2$ for $u \in C(0,L)$. 
Thus, by virtue of Proposition \ref{PropSufiISS}, 
system \eqref{GekoppelteNonLinSyst} is ISS whenever $L<1$. It is 
stressed that the coefficient of $\|u\|_{L_2(0,L)}^2$ 
in \eqref{Example_Estim} goes to $\infty$ as $L$ tends to $1$ from below. Hence, the ISS estimate \eqref{Example_Estim} is valid only if $L<1$. 

For the choice of input space $U=L_p(0,L)$ with $p \geq 1$, the case of 
$L \geq 1$ does not allow us to have an ISS estimate like 
\eqref{Example_Estim}. 
In fact, if $L \geq 1$ and $U=L_p(0,L)$ for any $p \geq 1$, the right hand side of \eqref{GekoppelteNonLinSyst} system is undefined. 

To see this take $u:l \mapsto |l-1|^{-\tfrac{1}{2p}} \in L_p(0,L)$ and 
$x:l \mapsto |l-1|^{-\tfrac{1}{2}+\tfrac{1}{2p}} \in L_2(0,L)$.
Then $f(x,u):l \mapsto \dfrac{x(l,t)}{1+|l-1|x(l,t)^2}u(l,t) \notin L_2(0,L)$.
Thus, according to our formulation of \eqref{InfiniteDim}, the system \eqref{GekoppelteNonLinSyst} is not well-defined for $U=L_p(0,L)$  for any real $p \geq 1$.

For the choice of input space $U=C(0,L)$, we expect that the
system \eqref{GekoppelteNonLinSyst} is not ISS for $L \geq 1$, 
but we have not proved it at this time. 
The blow-up of the $\sigma$-term in \eqref{Example_Estim} corresponding to 
the dissipation inequality \eqref{DissipationIneq} for $V=W$ suggests the 
absence of ISS for the system \eqref{GekoppelteNonLinSyst} in the case of 
$L \geq 1$. 
It is worth noticing that the iISS estimate 
\eqref{Examp_Conclusing_iISS_estim} is valid for all $L>0$, 
that is for all $L>0$ we do no have in \eqref{Examp_Conclusing_iISS_estimb} a blowup of $\sigma$, and $\alpha$ does not
become a zero function. In fact, one can recall the idea 
demonstrated by Proposition~\ref{PropSufiISS} with 
Definition~\ref{def:iISSV}. 
An iISS Lyapunov function characterizes the absence of ISS by 
only allowing the decay rate $\alpha$ in the dissipation 
inequality \eqref{DissipationIneq} to satisfy 
$\liminf_{s\to\infty}\alpha(s)<\lim_{s\to\infty}\sigma(s)$. 
The iISS Lyapunov function yields ISS when 
$\liminf_{s\to\infty}\alpha(s)\geq \lim_{s\to\infty}\sigma(s)$, which is not the case in \eqref{Examp_Conclusing_iISS_estimb}. 

Being able to uniformly characterize iISS irrespectively of whether 
systems are ISS or not 
should be advantageous in many applications. For instance, 
ISS of subsystems is not necessary for stability of 
their interconnections, and there are examples of 
UGAS interconnections involving iISS systems which are not ISS 
\cite{Ito06,ItJ09,AnA07,KaJ12}.

\section{Conclusion}
\label{sec:Conclusion}

We have proved that infinite-dimensional bilinear systems described by differential equations in Banach spaces are integral input-to-state stable provided they are uniformly globally asymptotically stable. For the systems whose state space is Hilbert we have obtained under some additional restrictions another proof of this result, which leads to a construction of an iISS Lyapunov function for the system. 

The possible directions for future research are investigation of iISS of more general nonlinear control systems and development of novel methods for construction of iISS Lyapunov functions for such systems. Another challenging problem is a study of interconnected infinite-dimensional systems, whose subsystems are iISS or ISS. These questions have been treated in a recent paper \cite{MiI14d} for interconnections of two parabolic systems with 1-dimensional spatial domain. However, many interesting questions are still open.

\section{Appendix}

\subsection{Proof of Proposition~\ref{DensityArg}}
\label{app:Density Argument}

%
%


Since $\hat{\Sigma}$ is iISS, there exist $\beta \in \KL$ and $\mu\in\K$, $\theta \in \Kinf$, such that $\forall \hat{x} \in \hat{X},\; \forall \hat{u} \in \hat{U}_c$ and $\forall t\geq 0$ it holds that 
\begin {equation}
\label{DichteArg_1}
\| \phi(t,\hat{x},\hat{u}) \|_{X} \leq \beta(\| \hat{x} \|_{X},t) + \theta \Big(\int_0^t \mu(\|\hat{u}(s)\|_{U})ds\Big).
\end{equation}
Let $\Sigma$ be not iISS with the same $\beta,\mu,\theta$. Then there exist $t^*>0$, $x\in X$, $u \in U_c$:
\begin {equation}
\label{DichteArg_2}
\| \phi(t^*,x,u) \|_{X} = \beta(\| x \|_{X},t^*) + \theta \Big(\int_0^{t^*} \mu(\|u(s)\|_U)ds\Big) + r,
\end{equation}
where $r=r(t^*,x,u)>0$.
From \eqref{DichteArg_1} and \eqref{DichteArg_2} we obtain
\begin{align}
\| \phi(t^*,x,u) \|_{X} - & \| \phi(t^*,\hat{x},\hat{u}) \|_{X} \geq 
\beta(\| x \|_{X},t^*) - \beta(\| \hat{x} \|_{X},t^*) 
\nonumber \\
&\quad + \theta \Big(\int_0^{t^*}\hspace{-1.2ex}\mu(\|u(s)\|_U)ds\Big) - \theta \Big(\int_0^{t^*}\hspace{-1.2ex}\mu(\|\hat{u}(s)\|_{U})ds\Big) + r.
\label{DichteArg_3}
\end{align}
Since $\hat{X}$ and $\hat{U}_c$ are dense in $X$ and $U_c$ respectively, 
and since operator $u \mapsto \theta \Big(\int_0^{t^*} \mu(\|u(s)\|_U)ds\Big)$ is continuous, we can find sequences $\{\hat{x}_i \} \subset \hat{X}$: $\|x-\hat{x}_i \|_X \to 0$ and $\{\hat{u}_i \} \subset \hat{U}_c$: $\|u-\hat{u}_i \|_{U_c} \to 0$.
From \eqref{DichteArg_3} it follows that for each arbitrary $\eps>0$, 
there exist $\hat{x}_i$ and $\hat{u}_i$ such that 
\begin{equation*}
\| \phi(t^*,x,u) - \phi(t^*,\hat{x}_i,\hat{u}_i) \|_{X}  \geq
\left| \| \phi(t^*,x,u) \|_{X} - \| \phi(t^*,\hat{x}_i,\hat{u}_i) \|_{X}  \right| \\
\geq r - 2\eps.
\end{equation*}
This contradicts to the assumption of continuous dependence of $\Sigma$ on initial states and inputs.
Thus, $\Sigma$ is iISS with the same $\theta$, $\beta$ and $\mu$ 
in \eqref{iISS_Estimate}.

\subsection{Proof of Theorem~\ref{thm:Dissipative_Implicative}}
\label{app:Implication_Dissipation}

%
%
%

First, assume that $V$ is an ISS Lyapunov function in the 
implicative formulation for \eqref{InfiniteDim}. 
Pick any $x \in X$ and $u \in U_c$ s.t. $\|x\|_X \leq \gamma(\|u(0)\|_U)$.
Since $V$ is Fr\'echet differentiable in $X$ and since for $x \in D(A)$ the trajectory $\phi(\cdot,x,u)$ is differentiable, 
 $V(x(t))$ is also differentiable (see \cite[par. 2.2]{ATF87}) and can be computed as
\begin{align}
\dot{V}_u(x) &=  \frac{\partial V}{\partial x}(x) \big(Ax + f(x,u(0))\big) 
\nonumber \\
&=  \frac{\partial V}{\partial x}(x) (Ax + f(x,0)) 
+ \frac{\partial V}{\partial x}(x)( f(x,u(0)) - f(x,0)) 
\nonumber \\
&\leq  -\eta(\|x\|_X) + \Big\| \frac{\partial V}{\partial x}(x)\left(f(x,u(0)) - f(x,0)\right)\Big\|_X
\label{TempEstimate-1}
\end{align}
Here $\frac{\partial V}{\partial x}(x)$ denotes a Fr\'echet derivative of $V$ at point $x \in X$ (which is a bounded linear operator from $X$ to $\R$ with an operator norm $\|\cdot\|$). 
Since $\frac{\partial V}{\partial x}$ is bounded on bounded balls, there exists $q_2 \in \K$ such that 
\begin{eqnarray*}
\Big\| \frac{\partial V}{\partial x}(x) \Big\| \leq q_2(\|x\|_X) .
\end{eqnarray*}
Moreover, since $f$ is Lipschitz w.r.t. $x$ on bounded subsets of $X$, we have
\begin{eqnarray*}
\|f(x,u(0)) - f(0,u(0))\|_X \leq w(\|x\|_X)\|x\|_X
\end{eqnarray*}
for some continuous non-decreasing function $w: \R_+\to\R_+$. 
Thus, we have 
\begin{eqnarray*}
\left\| \frac{\partial V}{\partial x}(x)\left(f(x,u(0)) - f(0,u(0))\right)\right\|_X \leq \hat{w}(\|x\|_X)\|x\|_X
\end{eqnarray*}
for some $\hat{w}\in\K$. 
This implies
\begin{align*}
 \left\| \frac{\partial V}{\partial x}(x)\left(f(x,u(0)) - f(x,0)\right)\right\|_X
\leq& \|\tfrac{\partial V}{\partial x}(x)(f(x,u(0)) - f(0,u(0)))\|_X \\
&\hspace{10ex}+ \|\tfrac{\partial V}{\partial x}(x)(f(0,u(0)) - f(x,0))\|_X  \\
\leq& \hat{w}(\|x\|_X)\|x\|_X + \|\tfrac{\partial V}{\partial x}(x)f(0,u(0))\|_X \\
&\hspace{12ex}+ \|\tfrac{\partial V}{\partial x}(x)f(x,0)\|_X \\
\leq& 2 \hat{w}(\|x\|_X)\|x\|_X + \|\tfrac{\partial V}{\partial x}(x)f(0,u(0))\|_X.  
\end{align*}
Due to \eqref{eq:Boundedness_wrt_u} we proceed from \eqref{TempEstimate-1} to
\begin{align*}
\dot{V}_u(x) \leq &-\eta(\|x\|_X) + 2 \hat{w}(\|x\|_X)\|x\|_X + p(\|x\|_X)+q(\|u(0)\|_U).
\end{align*} 
And for $\|x\|_X \leq \gamma(\|u(0)\|_U)$
we obtain finally 
\begin{eqnarray}
\label{AG-like_estimate}
\dot{V}_u(x) \leq -\eta(\|x\|_X)+\sigma(\| u(0)\|_U)
\end{eqnarray} 
with  $\sigma(r):= 2 \hat{w}(\gamma(r))\gamma(r) + p(\gamma(r))+q(r)$ 
which is of class $\K_\infty$. 
Pick $\alpha=\eta$, which is of class $K_\infty$ due to Definition 6. 
Combining \eqref{AG-like_estimate} with the implication \eqref{GainImplikation} we obtain that
for all $x \in D(A)$ and all $u \in U_c$
\begin{eqnarray*}
\dot{V}_u(x) \leq  -\alpha(\|x\|_X)  + \sigma(\| u(0)\|_U),
\end{eqnarray*}
i.e. $V$ is an ISS Lyapunov function in a dissipative form 
\eqref{DissipationIneq} for states from $D(A)$.

Since $A$ generated an analytic semigroup, we can apply a density argument for analytic semigroups as in the proof of Theorem~\ref{Thm:LF_Constructions_for_bilinear_systems} to prove that $V$ is an ISS Lyapunov function in a dissipative form for \eqref{InfiniteDim} on the whole $X$.

%
%
%
%

Now let us prove the converse. Basically we can follow 
the argument for finite-dimensional systems. 
Suppose that $V$ is an ISS Lyapunov function in a dissipative 
formulation, i.e., \eqref{DissipationIneq}. 
Due to $\sigma\in\K$, property \eqref{eq:ISSalphsig} ensures 
the existence of $\hat{\alpha}\in\K$ such that 
$\lim_{\tau\to\infty}\hat{\alpha}(\tau)\geq \lim_{\tau\to\infty}\sigma(\tau)$ and
\begin{align*}
\hat{\alpha}(s)\leq\alpha(s), \quad \forall s\in\R_+
\nonumber
\end{align*}
If either $\lim_{\tau\to\infty}\hat{\alpha}(\tau)=\infty$
or $\lim_{\tau\to\infty}\hat{\alpha}(\tau)>\lim_{\tau\to\infty}\sigma(\tau)$ holds, we can pick a constant $K>1$ such that 
$\lim_{\tau\to\infty}\hat{\alpha}(\tau)\geq K
\lim_{\tau\to\infty}\sigma(\tau)$.  
Then $V$ achieves \eqref{GainImplikation} 
with $\gamma=\hat{\alpha}^{-1}\circ K\sigma\in\K$ 
and $\eta=(1-1/K)\hat{\alpha}\in\K$.  
In the case of $\infty>\lim_{\tau\to\infty}\hat{\alpha}(\tau)=
\lim_{\tau\to\infty}\sigma(\tau)$, there exists a continuous function 
$\omega: \R_+\to\R_+$ satisfying ${\rm id}+\omega\in\K_\infty$ and  
\begin{align*}
& 
\omega(\alpha(s))>0, \quad s\in(0,\infty)  
\\
& 
\lim_{\tau\to\infty}\omega(\alpha(\tau))=0 .
\end{align*}
Then property \eqref{DissipationIneq} with 
$\gamma=\hat{\alpha}^{-1}\circ({\rm id}+\omega)\circ\sigma\in\K_\infty$ yields 
\eqref{GainImplikation} with 
$\eta=\left({\rm id}-({\rm id}+\omega)^{-1}\right)\circ\hat{\alpha}
\in\PD$. 
The function $\eta$ obtained in the above two cases is 
only guaranteed to satisfy either $\eta\in\K$ or $\eta\in\PD$. 
The function $V$ can be transformed into another continuous function 
$\hat{V}:X \to \R_+$ by applying the technique in \cite{SoT95} 
to obtain $\eta\in\K_\infty$ in \eqref{GainImplikation}. 
The transformed $\hat{V}$ is an ISS Lyapunov function of 
\eqref{InfiniteDim} in an implicative form.

\subsection{Proof of Lemma~\ref{thm:ComparisonPrinciple}}
\label{app:Comparison_Principle}

For a continuous function $y:\R \to \R$, let 
the right upper Dini derivative and 
the right lower Dini derivative be defined by 
$D^+y(t):=\limsup_{h \to +0}\frac{y(t+h)-y(t)}{h}$ and 
$D^-y(t):=\liminf_{h \to +0}\frac{y(t+h)-y(t)}{h}$, respectively. 
We first state the following simple lemma. 
\begin{Lemm}
\label{Lemma2}
Let $g$ be a continuous function on a bounded interval $[a,b]$ and let $t$ be such that $f: Im(g) \to\R$ is continuously differentiable at $g(t)$ and
$\frac{df}{dr}(r)\Big|_{r=g(t)} > 0$.
Then
\[
D^{+}f(g(t)) = \frac{df}{dr}(r)\Big|_{r=g(t)} D^{+}g(t).
\]
\end{Lemm}
\begin{proof}
Consider a function $h: t\in\R \to \R$ which is continuous at $t=c$ 
and satisfies $h(c) > 0$. Let $k:\R \to \R$ be an arbitrary function. 
Then the following holds (allowing the limits to be equal $\pm \infty$)
\[
\limsup_{t \to c}\big(h(t)k(t)\big) = \lim_{t\to c}h(t) \cdot \limsup_{t \to c}k(t).
\]
Thus, the claim follows from the definition of the right upper Dini derivative. 
\end{proof}

We also use the following proposition (see e.g. \cite[Corollary 4, p. 113]{RoF10}):  
\begin{proposition}
\label{prop:Relaxed_NL}
Let $f$ be an increasing continuous function on a bounded 
interval $[a,b]$. Then $f$ is differentiable almost everywhere 
on $(a,b)$. Furthermore, $\frac{df}{dt}$ is integrable over $[a,b]$ and
\begin{eqnarray}
\int_{t_0}^t \frac{df}{ds} ds 
\leq \int_{a}^b D^{-}f(s) ds 
\leq \int_{a}^b D^{+}f(s) ds 
\leq f(b) - f(a).
\label{eq:Relaxed_Newton-Leibniz}
\end{eqnarray}
\end{proposition}

Now we are in a position to prove Lemma~\ref{thm:ComparisonPrinciple}.
From \eqref{eq:ComparisonPrinciple} it follows that 
\begin{eqnarray}
\label{HelpIneq}
\frac{\dot{y}(t)}{\alpha(y(t))} \leq - 1 
\end{eqnarray}
for all $t$ so that $y(t) \neq 0$.
Define $\eta:\R_+ \to [-\infty,\infty)$ by 
\begin{eqnarray}
\eta(s):= \int_{1}^{s}\frac{dr}{\alpha(r)}.
\label{eq:eta_def}
\end{eqnarray}
Without loss of generality, we assume 
$\lim_{s\to +0}\eta(s)=-\infty$ and 
$\lim_{s\to +\infty}\eta(s)=+\infty$\footnote{
Property $\lim_{s\to +\infty}\eta(s)=+\infty$ is not necessary 
since \eqref{eq:ComparisonPrinciple-eq5} is defined for $t\in\R_+$.}. 
If this is not the case, 
following the idea in \cite[Lemma 4.4]{LSW96}, 
we can replace $\alpha$ with $\min\{s,\alpha(s)\}$ to obtain 
a positive definite function satisfying 
\eqref{eq:ComparisonPrinciple}, $\lim_{s\to +0}\eta(s)=-\infty$ 
and $\lim_{s\to +\infty}\eta(s)=+\infty$. 
Since $\alpha$ is continuous, $\eta$ is continuously differentiable and its derivative is positive on its domain of definition. Thus, Lemma~\ref{Lemma2} implies 
\[
\frac{\dot{y}(t)}{\alpha(y(t))} = D^+ \eta(y(t)) , 
\]
and \eqref{HelpIneq} can be rewritten as
\begin{eqnarray}
D^+ \eta(y(t)) \leq - 1 . 
\label{eq:eta_deni_negative}
\end{eqnarray}
On the other hand, from \eqref{eq:ComparisonPrinciple} we see that 
$y$ is a decreasing function as long as $y(t)>0$.
Since $\alpha \in \PD$, $\eta$ is strictly increasing. 
Hence, $\eta(y(\cdot))$ is a decreasing function. Applying 
Proposition~\ref{prop:Relaxed_NL} to $-\eta(y(\cdot))$ 
together with \eqref{eq:eta_deni_negative} yields 
\begin{eqnarray}
\eta(y(t)) - \eta(y(0)) 
\leq -\int_0^t D^- \{-\eta(y(s))\}ds 
\leq \int_0^t D^+ \eta(y(s))ds 
\leq -t.
\label{eq:ComparisonPrinciple-eq4}
\end{eqnarray}
Since $\eta$ is strictly increasing, $\eta$ is invertible and its inverse $\eta^{-1}$ is a strictly increasing function, defined over 
$[-\infty,+\infty)$. 
An easy manipulation with \eqref{eq:ComparisonPrinciple-eq4} leads to
\begin{eqnarray}
y(t) \leq \eta^{-1}( \eta(y(0))  - t) 
\label{eq:ComparisonPrinciple-eq5}
\end{eqnarray}
for all $t\in[0,T)$, where $T:=\min\{ t\in \R_+ : y(t)=0)\}$. 
Here, let $T=\infty$ if $\{ t\in \R_+ : y(t)=0)\}=\emptyset$. 
If $y(t)=0$ holds for some $T\in\R_+$, then 
$y(t)=0$ is satisfied for all $t\geq T$, due to 
\eqref{eq:ComparisonPrinciple} and $y(t)\geq 0$. 
Define $\beta:\R_+ \times \R_+ \to \R_+$ by
\[
\beta(r,t):=
\begin{cases}
\eta^{-1}( \eta(r)  - t) & \text{, if } r>0, \\ 
0                        & \text{, if } r=0.
\end{cases}
\]
Due to the strict increasing property of $\eta$ and 
$\lim_{s\to +0}\eta(s)=-\infty$, we have $\beta \in \KL$. 
With this $\beta$ the inequality 
\eqref{eq:ComparisonPrinciple_FinalEstimate} follows from 
\eqref{eq:ComparisonPrinciple-eq5} and $y(t)=0$, $\forall t\geq T$.

\bibliographystyle{abbrv}
\bibliography{Mir_LitList}

%

\end{document}